\shorttitle}
\@nx\MakeUppercase{\the\toks@}}
\patchcmd\@settitle{\uppercasenonmath\@title}{\Large}{}{}
\authors}
\newtheorem{theorem}{Theorem}[section]
\newtheorem{question}{Question}[section]
\newtheorem{corollary}{Corollary}[section]
\newtheorem{remark}{Remark}[section]
\numberwithin{equation}{section}
\newcommand\norm[1]{\left\lVert#1\right\rVert}
\newcommand\aps[1]{\left\lvert#1\right\rvert}
\newcommand\skal[2]{\left\langle #1,#2\right\rangle}
\newcommand\scal[2]{\langle #1,#2\rangle}
\begin{document}
		%------------------------
		\title[Characterizations of positive operators via their powers]{Characterizations of positive operators via their powers}
		\keywords{positive operators, accretive operators, normal operators, numerical range, real part, normal roots}
		
		\subjclass[2020]{47B15, 47A12, 47A10}
		
		\author[H. Stankovi\'c]{Hranislav Stankovi\'c}
		\address{Faculty of Electronic Engineering, University of Ni\v s, Aleksandra Medvedeva 14, Ni\v s, Serbia
		}
		\email{\url{hranislav.stankovic@elfak.ni.ac.rs}}
		%------------------------
		
		\date{\today}
		%\date{June 24, 2019}
		
		\maketitle
		%------------------------
		%----------\thispagestyle{empty}
		
		\begin{abstract}
			In this paper, we present new characterizations of normal and positive operators in terms of their powers. Among other things, we show that if $T^2$ is normal, $\mathcal{W}(T^{2k+1})$ lies on one side of a line passing through the origin (possibly including some points on the line) for some $k\in\mathbb{N}$, and $\mathrm{asc\,}(T)= 1$ (or $\mathrm{dsc\,}(T)=1$), then $T$ must be normal. This complements the previous result due to Putnam \cite{Putnam57}. Furthermore, we prove that $T$ is normal (positive) if and only if $\mathrm{asc\,}(T)= 1$ and there exist coprime numbers $p,q\geq 2$ such that $T^p$ and $T^q$ are normal (positive). Finally, we also show that $T$ is positive if and only if $T^k$ is accretive for all $k\in\mathbb{N}$, which answers the question from \cite{Mortad22counterexamples} in the affirmative.
		\end{abstract}

		\section{Introduction}
		
		Let $\mathcal{H}$ be a complex Hilbert space and let $\mathfrak{B}(\mathcal{H})$ denote the set of all bounded linear operators on $\mathcal{H}$. For $T\in\mathfrak{B}(\mathcal{H})$, $\mathcal{N}(T)$  and $\mathcal{R}(T)$ stands for the null space and the range and  of $T$, respectively. If there exists $n\in\mathbb{N}$ such that $\mathcal{N}(T^n)=\mathcal{N}(T^{n+1})$ ($\mathcal{R}(T^n)=\mathcal{R}(T^{n+1})$), we say that operator $T$ has a finite ascent (descent). The smallest such $n$ is called the ascent (descent) of $T$ and it is denoted by $\mathrm{asc\,}(T)$ ($\mathrm{dsc\,}(T)$). \par 
		The adjoint of an operator $T\in\mathfrak{B}(\mathcal{H})$ will be denoted by $T^*$. Furthermore, the modulus of $T$ will be signified by $|T|=(T^*T)^{\frac{1}{2}}$, while $\mathrm{Re\,}{T}=\frac{T+T^*}{2}$ and $\mathrm{Im\,}{T}=\frac{T-T^*}{2i}$ will represent the real and the imaginary part of $T$, respectively. For the spectrum of $T$ we use the notation $\sigma(T)$, while the numerical range will be denote by $\mathcal{W}(T)$.
		
		An operator $T$ is said to be \emph{positive}, in notation $T\geq 0$, if $\skal{Tx}{x}\geq 0$ for all $x\in\mathcal{H}$, \emph{accretive} if $\mathrm{Re\,}{T}\geq 0$,  and \emph{self-adjoint (Hermitian)} if $T=T^*$.
		An operator $T$ is \textit{normal} if $T^*T=TT^*$. The study of normal operators has been distinctly successful. The main reason is definitely the Spectral Theorem that holds for such operators.
		Due to their importance in operator theory, as well as in quantum mechanics, many generalizations of the class have appeared over the decades, by weakening the commutativity condition. Some of the most important classes of such operators are the following:
		\begin{itemize}
			\item quasinormal operators: $T$ commutes with $T^*T$, i.e., $TT^*T=T^*T^2$;
			\item subnormal operators: there exist a Hilbert space $\mathcal{K}$, $\mathcal{K}\supseteq \mathcal{H}$, and a normal operator $N\in \mathfrak{B}(\mathcal{K})$ such that 
			$$
			N=\begin{bmatrix} T&\ast\\0&\ast\end{bmatrix}: \begin{pmatrix}
				\mathcal{H}\\
				\mathcal{H}^\perp
			\end{pmatrix}\to \begin{pmatrix}
				\mathcal{H}\\
				\mathcal{H}^\perp
			\end{pmatrix};
			$$
			\item hyponormal operators: $TT^*\leq T^*T$;
			\item paranormal operators: $\norm{Tx}^2\leq\norm{T^2x}$, for all $x\in\mathcal{H}$, $\norm{x}=1$;
		\end{itemize}
		It is well known that 
		\begin{align*}
			\text{positive}&\,\Rightarrow\,\text{self-adjoint}\,\Rightarrow\,\text{normal}\,\Rightarrow\,\text{quasinormal}\\
			&\,\Rightarrow\,\text{subnormal}\,\Rightarrow\,\text{hyponormal}\,\Rightarrow\,\text{paranormal},
		\end{align*}
		while self-adjoint $\nRightarrow$ accretive, and accretive $\nRightarrow$ normal, in general. For more details on these classes, as well as other generalizations of normal operators, see, for example, \cite{Furuta01}.
		\medskip 
		
		An interesting problem in operator theory involves determining conditions under which certain operators are normal. Many mathematicians have explored this topic, as noted in references \cite{Berberian70, Mortad10, Mortad19, MoslehianNabaviSales11} and the sources cited therein. A notable contribution by Stampfli \cite{Stampfli62} demonstrated that for a hyponormal operator \(T\), if \(T^n\) is normal for some \(n \in \mathbb{N}\), then \(T\) is normal. The  hyponormality condition can be further relaxed to paranormality, as seen in \cite{Ando72}. For the related problems for different operator classes, see \cite{CvetkovicIlicStankovic24, CurtoLeeYoon20, PietrzyckiStochel21, PietrzyckiStochel23, Stankovic23_factors, Stankovic23_roots, Stankovic24a}.
		
		Another method to tackle these problems involves examining the spectrum of an operator. In \cite{Putnam70}, the author proved that if the spectrum of a hyponormal operator has zero area, the operator must be normal. This result was later extended to more general classes of operators (see \cite{ChoItoh95, Tanahashi04, Xia83}).
		
		Conditions implying self-adjointness and positivity were also investigated by many authors and via many different approaches. See, for example, \cite{Berberian62, FongIstratescu79, FongTsui81, JeonKimTanahashi08, Kittaneh84, Mortad12}.
		
		\medskip
		
		The main goal of this paper is to present some conditions which would guarantee the positivity of an operator, by imposing some restrictions on their powers. Actually, this will allows us to give some new characterizations of positive operators. Moreover, we shall also examine the relations between positive and normal operators with accretive operators.
		
		The paper is organized as follows. In Section \ref{sec:preliminaries}, we present some known results, which will be frequently used in the paper and which will be crucial in the proofs. In Section \ref{sec:roots_of_normal}, we mainly deal with the square roots of normal operators, and present some conditions which imply positivity and normality of an operator, generalizing some classical Putnam's results on the topic, along the way. In Section \ref{sec:accretive_powers}, we show that an operator whose powers are accretive must be positive, and also explore whether  we can exclude some powers to be accretive, and still deduce the positivity of an operator. Finally, Section \ref{sec:conclusion} serves to collect all obtained characterizations of positive operators into one result, and also discuss possible further research.

		\section{Preliminaries}\label{sec:preliminaries}

		In this brief section, we shall recall some results and concepts which will be of importance later.

		Given a closed subspace $\mathcal{S}$ of $\mathcal{H}$, let $P_\mathcal{S}$ denote the orthogonal projection
		onto $\mathcal{S}$. For any $T\in\mathfrak{B}(\mathcal{H})$, the operator matrix decomposition of $T$ induced by $S$ is given by
		\begin{equation}\label{eq:matrix_decomposition}
			T=\begin{bmatrix}
				T_{11}&T_{12}\\
				T_{21}&T_{22}
			\end{bmatrix},
		\end{equation}
		where $T_{11}=P_STP_S|_{S}$, $T_{12}=P_ST(I-P_S)|_{S^\perp}$, $T_{21}=(I-P_S)TP_S|_{S}$ and $T_{22}=(I-P_S)T(I-P_S)|_{S^\perp}$.

		\begin{theorem}\cite{AndersonTrapp75}\label{thm:positive_matrix}
			Let $S$ be a closed subspace of $\mathcal{H}$ and $T\in\mathfrak{B}(\mathcal{H})$ have the matrix operator decomposition induced by $S$ and given by (\ref{eq:matrix_decomposition}). Then, $T$ is positive if and only if
			\begin{enumerate}[$(i)$]
				\item $T_{11}\geq 0$;
				\item $T_{21}=T^*_{12}$;
				\item $\mathcal{R}(T_{12})\subseteq\mathcal{R}(T_{11}^{1/2})$;
				\item $T_{22}=\left((T_{11}^{1/2})^{\dagger}T_{12}\right)^*(T_{11}^{1/2})^{\dagger}T_{12}+F$, where $F\geq 0$.
			\end{enumerate}
		\end{theorem}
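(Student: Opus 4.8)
The plan is to prove both implications directly from the definition of positivity, using the factorization $T_{11}=T_{11}^{1/2}T_{11}^{1/2}$ together with Douglas's range--inclusion (majorization) theorem to handle the square root appearing in $(iii)$ and $(iv)$. For the necessity direction, suppose $T\geq 0$. Since a positive operator on a complex Hilbert space is self-adjoint, comparing $T$ with $T^*$ block by block forces $T_{11}=T_{11}^*$, $T_{22}=T_{22}^*$ and $T_{21}=T_{12}^*$, which is $(ii)$; restricting the quadratic form $\skal{T(x\oplus y)}{x\oplus y}$ to vectors with $y=0$ yields $(i)$. Writing the form out as $\skal{T_{11}x}{x}+2\,\mathrm{Re\,}\skal{T_{12}y}{x}+\skal{T_{22}y}{y}\geq 0$, scaling $x$ by a real parameter and adjusting its phase, I would read off (as a nonnegative real quadratic) the Cauchy--Schwarz-type bound $\aps{\skal{T_{12}y}{x}}^2\leq \skal{T_{11}x}{x}\,\skal{T_{22}y}{y}=\norm{T_{11}^{1/2}x}^2\,\skal{T_{22}y}{y}$. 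This says the functional $x\mapsto\skal{x}{T_{12}y}$ is dominated by the seminorm $x\mapsto\norm{T_{11}^{1/2}x}$, so by the standard majorization criterion of Douglas, $T_{12}y\in\mathcal{R}(T_{11}^{1/2})$ for every $y$, i.e. $(iii)$.

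Once $(iii)$ is in hand, Douglas's theorem furnishes the bounded reduced solution $K:=(T_{11}^{1/2})^{\dagger}T_{12}$ satisfying $T_{11}^{1/2}K=T_{12}$ and $\mathcal{R}(K)\subseteq\overline{\mathcal{R}(T_{11}^{1/2})}$. Substituting $T_{12}y=T_{11}^{1/2}Ky$ into the quadratic form and completing the square gives $\skal{T(x\oplus y)}{x\oplus y}=\norm{T_{11}^{1/2}x+Ky}^2+\skal{(T_{22}-K^*K)y}{y}$. Because $Ky\in\mathcal{R}(T_{11}^{1/2})$, the first term is annihilated by a suitable choice of $x$, so the infimum over $x$ of the left-hand side equals $\skal{(T_{22}-K^*K)y}{y}$; positivity of $T$ then forces $T_{22}-K^*K\geq 0$, and setting $F:=T_{22}-K^*K=T_{22}-\left((T_{11}^{1/2})^{\dagger}T_{12}\right)^*(T_{11}^{1/2})^{\dagger}T_{12}$ gives exactly $(iv)$.

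For sufficiency I would assume $(i)$--$(iv)$ and exhibit $T$ in the form $M^*M$. With $K=(T_{11}^{1/2})^{\dagger}T_{12}$ (bounded, by $(iii)$ and Douglas) and $F\geq 0$ as in $(iv)$, put $M=\begin{bmatrix}T_{11}^{1/2}&K\\0&F^{1/2}\end{bmatrix}$. A direct block computation, using $T_{11}^{1/2}K=T_{12}$ (valid since $\mathcal{R}(T_{12})\subseteq\mathcal{R}(T_{11}^{1/2})$ makes $T_{11}^{1/2}(T_{11}^{1/2})^{\dagger}$ act as the identity on $\mathcal{R}(T_{12})$), $K^*T_{11}^{1/2}=T_{12}^*=T_{21}$ from $(ii)$, and $K^*K+F=T_{22}$ from $(iv)$, shows $M^*M=T$, whence $T\geq 0$.

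The step I expect to be the main obstacle is the correct treatment of the square root in $(iii)$--$(iv)$: in infinite dimensions $\mathcal{R}(T_{11}^{1/2})$ is strictly larger than $\mathcal{R}(T_{11})$ and need not be closed, so $(T_{11}^{1/2})^{\dagger}$ is in general unbounded. The crucial point, which legitimizes both the completion-of-the-square argument and the $M^*M$ factorization, is that $K=(T_{11}^{1/2})^{\dagger}T_{12}$ is nonetheless a \emph{bounded} operator and that the identity $T_{11}^{1/2}K=T_{12}$ holds exactly; this is precisely the content of Douglas's factorization theorem applied with $A=T_{11}^{1/2}$ and $B=T_{12}$, and it is the tool on which the whole proof hinges.
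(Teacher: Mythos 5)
The paper offers no proof of this statement: it is quoted from Anderson and Trapp \cite{AndersonTrapp75} and used as a black box, so there is no in-paper argument to compare yours against. On its own merits your proof is correct and is the standard Schur-complement argument: the discriminant trick yields $\aps{\skal{T_{12}y}{x}}^2\leq\skal{T_{11}x}{x}\skal{T_{22}y}{y}$, Douglas's majorization theorem converts this into the range inclusion $(iii)$ and supplies the bounded reduced solution $K=(T_{11}^{1/2})^{\dagger}T_{12}$ with $T_{11}^{1/2}K=T_{12}$, completing the square gives $(iv)$, and the factorization $T=M^*M$ handles the converse. The one imprecision is the clause ``the first term is annihilated by a suitable choice of $x$'': Douglas only guarantees $\mathcal{R}(K)\subseteq\overline{\mathcal{R}(T_{11}^{1/2})}$, not $\mathcal{R}(K)\subseteq\mathcal{R}(T_{11}^{1/2})$, so $\norm{T_{11}^{1/2}x+Ky}^2$ can in general only be made arbitrarily small, not exactly zero. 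Your following sentence, which takes the infimum over $x$, already states the argument in the correct form, so nothing breaks: $\skal{(T_{22}-K^*K)y}{y}\geq 0$ follows by letting the first term tend to $0$, and the proof stands.
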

		
		The crucial fact in many proofs will be the following representation theorem of the square roots of normal operators proved by Radjavi and Rosenthal in \cite{RadjaviRosenthal71}. We present it here in a slightly different form.
		\begin{theorem}\cite{RadjaviRosenthal71}\label{thm:normal_square}
			Let $T\in\mathfrak{B}(\mathcal{H})$. Operator $T$ is a square root of a normal operator if and only if 
			\begin{equation}\label{eq:normal_square_representation}
				T=\begin{bmatrix}
					A&0&0\\
					0&B&C\\
					0&0&-B
				\end{bmatrix},
			\end{equation}
			where $A, B$ are normal, $C\geq 0$, $C$ is one-to-one and $BC=CB$. Moreover, $B$ can be chosen so that $\sigma(B)$ lies in the closed upper half-plane and  the Hermitian part of $B$ is non-negative.
		\end{theorem}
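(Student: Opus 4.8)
The easy implication I would dispatch first. If $T$ has the block form (\ref{eq:normal_square_representation}), then because $BC=CB$ the middle $2\times 2$ block squares to $\begin{bmatrix} B^2 & BC-CB\\ 0 & B^2\end{bmatrix}=\begin{bmatrix} B^2 & 0\\ 0 & B^2\end{bmatrix}$, so that $T^2=A^2\oplus B^2\oplus B^2$. Since $A,B$ are normal, $A^2$ and $B^2$ are normal, and a direct sum of normal operators is normal; hence $T^2$ is normal and $T$ is a square root of a normal operator. This direction is a one-line computation.

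For the converse, set $N=T^2$ and assume $N$ is normal. The first step is to fix the commutation structure: $T$ commutes with $N=T^2$, so by the Fuglede theorem $T$ commutes with $N^*$ as well. Thus $T$ lies in the commutant of $\{N,N^*\}$, and in particular every spectral projection of $N$ reduces $T$. I would then represent $N$ as multiplication by $\lambda$ on a direct integral $\int^{\oplus}\mathcal{H}_\lambda\,d\mu(\lambda)$ over $\sigma(N)$. Because $T$ commutes with $\{N,N^*\}$, it is decomposable, $T=\int^{\oplus}T_\lambda\,d\mu(\lambda)$, and the relation $T^2=N$ forces $T_\lambda^2=\lambda\,I_{\mathcal{H}_\lambda}$ for $\mu$-a.e.\ $\lambda$. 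Everything then reduces to classifying, on each fibre, the bounded operators whose square is a scalar.

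Fix $\lambda$ and pick the root $b=b_\lambda$ in a prescribed fundamental half-plane for $z\mapsto -z$. If $\lambda\neq 0$ then $J_\lambda:=b^{-1}T_\lambda$ satisfies $J_\lambda^2=I$, so $P_\lambda:=\tfrac12(I+J_\lambda)$ is an idempotent; relative to $\overline{\mathcal{R}(P_\lambda)}\oplus\mathcal{R}(P_\lambda)^{\perp}$ one has $P_\lambda=\begin{bmatrix} I & Y_\lambda\\ 0 & 0\end{bmatrix}$, hence $T_\lambda\cong b\begin{bmatrix} I & 2Y_\lambda\\ 0 & -I\end{bmatrix}$. A unitary (singular-value) rotation of the two factors makes $Y_\lambda\geq 0$; splitting off $\ker Y_\lambda$ peels away two scalar summands $bI\oplus(-bI)$, which feed the normal block $A$, while on the injective part a unit-modulus rephasing turns $2bY_\lambda$ into a positive operator $C_\lambda\geq 0$, leaving $\begin{bmatrix} bI & C_\lambda\\ 0 & -bI\end{bmatrix}$ with $bI\cdot C_\lambda=C_\lambda\cdot bI$. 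The case $\lambda=0$ (so $T_\lambda^2=0$) is the $b=0$ specialization, giving $\begin{bmatrix} 0 & C_\lambda\\ 0 & 0\end{bmatrix}$ with $C_\lambda\geq 0$ injective. Reassembling the fibres, the scalar summands integrate to a normal operator $A$, the multiplications by $b_\lambda$ integrate to a normal $B$, and the $C_\lambda$ integrate to a positive injective $C$ commuting with $B$, yielding the claimed form. Choosing the upper-half-plane branch places $\sigma(B)$ in the closed upper half-plane, while the right-half-plane branch instead arranges $\mathrm{Re\,}B\geq 0$.

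The main obstacle is measurability, not the fibrewise algebra. Each fibrewise step---choosing the Borel branch $\lambda\mapsto b_\lambda$, splitting $\mathcal{H}_\lambda$ into its normal and genuinely non-normal parts, extracting $Y_\lambda$ together with $\ker Y_\lambda$, and performing the rephasing making $C_\lambda\geq 0$---must be carried out so that the resulting fields of subspaces and operators are $\mu$-measurable, guaranteeing that $A$, $B$, and $C$ are honest decomposable operators. Establishing this requires the measurable cross-section / von Neumann selection machinery of direct-integral theory, and verifying that the non-normal fibres assemble into a single doubled block $\begin{bmatrix} B & C\\ 0 & -B\end{bmatrix}$ with a globally positive, injective $C$ is the technical heart of the argument.
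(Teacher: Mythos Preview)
The paper does not prove Theorem~\ref{thm:normal_square}; it is quoted in the preliminaries from Radjavi--Rosenthal \cite{RadjaviRosenthal71} and used as a black box, so there is no in-paper argument to compare your proposal against.

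For what it is worth, your sketch follows the same line as the original Radjavi--Rosenthal proof: use Fuglede to see that $T$ lies in the commutant of $\{N,N^*\}$ with $N=T^2$, pass to the direct-integral model of $N$, and reduce to the fibrewise problem $T_\lambda^2=\lambda I_{\mathcal{H}_\lambda}$, which amounts to classifying bounded involutions ($\lambda\neq 0$) and nilpotents of index two ($\lambda=0$) up to unitary equivalence. The fibrewise algebra you outline is correct, and you are right that the substantive work lies in the measurable-selection bookkeeping needed to reassemble the fibres into global operators $A$, $B$, $C$. One minor remark: the two normalizations in the final sentence of the theorem, $\sigma(B)$ in the closed upper half-plane versus $\mathrm{Re\,}B\geq 0$, correspond to two different Borel branches of $\lambda\mapsto\sqrt{\lambda}$ and are not in general simultaneously attainable (take $\lambda=-i$); your reading of them as alternative normalizations is the correct one.
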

		
		Finally, we also recall some celebrated results regarding normal operators and operator inequalities involving positive operators.
		\begin{theorem}
			[L\" owner-Heinz inequality {\cite{Heinz51, Lowner34}}]\label{thm:lowner_heinz_ineq} If $A,B\in\mathfrak{B}(\mathcal{H})$ are positive operators such that $B\leq A$ and $p\in[0,1]$, then $B^p\leq A^p$.
		\end{theorem}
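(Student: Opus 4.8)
The plan is to reduce the inequality to the single fact that operator inversion is antitone, via an integral representation of the power function. First I would dispose of the boundary cases: for $p=1$ the claim is just the hypothesis $B\leq A$, and for $p=0$ both sides equal $I$ (on the appropriate subspace), so it suffices to treat $0<p<1$. I would also reduce to strictly positive operators by replacing $A,B$ with $A+\varepsilon I$ and $B+\varepsilon I$ for $\varepsilon>0$: since $x\mapsto x^p$ is continuous on $[0,\infty)$, the continuous functional calculus lets me pass to the limit $\varepsilon\downarrow 0$ in $(B+\varepsilon I)^p\leq(A+\varepsilon I)^p$ and recover the general statement, so I may assume $A,B$ are invertible and bounded below.

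Next I would record the integral representation, valid for $t>0$ and $0<p<1$,
\begin{equation*}
t^p=\frac{\sin(p\pi)}{\pi}\int_0^\infty\lambda^{p-1}\,\frac{t}{t+\lambda}\,d\lambda,
\end{equation*}
which follows from the standard Beta-type integral $\int_0^\infty\frac{\mu^{p-1}}{1+\mu}\,d\mu=\pi/\sin(p\pi)$ after the substitution $\lambda=t\mu$. Applying the functional calculus to the positive invertible operator $A$, whose spectrum is a compact subset of $(0,\infty)$, turns this into the norm-convergent operator integral
\begin{equation*}
A^p=\frac{\sin(p\pi)}{\pi}\int_0^\infty\lambda^{p-1}\bigl(I-\lambda(A+\lambda I)^{-1}\bigr)\,d\lambda,
\end{equation*}
and likewise for $B$; convergence at $0$ uses $p>0$ and convergence at $\infty$ uses $p<1$.

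The heart of the argument is the antitonicity of inversion: if $0<X\leq Y$ then $Y^{-1}\leq X^{-1}$. I would prove this by writing $X\leq Y$ as $Y^{-1/2}XY^{-1/2}\leq I$, deducing $\norm{X^{1/2}Y^{-1/2}}\leq 1$, and hence $X^{1/2}Y^{-1}X^{1/2}\leq I$, which rearranges to $Y^{-1}\leq X^{-1}$. Applying this with $X=B+\lambda I\leq A+\lambda I=Y$ gives $(A+\lambda I)^{-1}\leq(B+\lambda I)^{-1}$ for every $\lambda>0$; multiplying by $-\lambda<0$ and adding $I$ yields $I-\lambda(A+\lambda I)^{-1}\geq I-\lambda(B+\lambda I)^{-1}$. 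Since the weight $\tfrac{\sin(p\pi)}{\pi}\lambda^{p-1}$ is nonnegative, integrating this pointwise operator inequality preserves order and produces $A^p\geq B^p$, which is exactly the assertion $B^p\leq A^p$.

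The main obstacle is really the operator-monotonicity of the resolvent, i.e.\ the inversion lemma above; once that is in hand the rest is bookkeeping. The two points demanding care are the norm-convergence of the operator integrals, so that integrating the pointwise inequality is legitimate, and the $\varepsilon$-limit for possibly non-invertible $A,B$, both of which are routine but must be made explicit to justify the functional-calculus manipulations.
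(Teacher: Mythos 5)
The paper does not prove this statement at all: the L\"owner--Heinz inequality is quoted in Section~\ref{sec:preliminaries} as a classical result with references to Heinz and L\"owner, so there is no in-paper argument to compare against. Your proposal is the standard integral-representation proof and it is correct and complete: the reduction to $0<p<1$ and to invertible operators via $A+\varepsilon I$, the identity $t^p=\frac{\sin(p\pi)}{\pi}\int_0^\infty\lambda^{p-1}\frac{t}{t+\lambda}\,d\lambda$, the antitonicity of inversion proved through $\norm{X^{1/2}Y^{-1/2}}\leq 1$, and the norm-convergence estimates at $0$ and $\infty$ (using $\norm{A(A+\lambda I)^{-1}}\leq\min\{1,\norm{A}/\lambda\}$) all check out, and integrating the pointwise operator inequality against the nonnegative weight is legitimate. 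The only cosmetic point is the $p=0$ case: with the usual convention $t^0=1$ on all of $[0,\infty)$ one simply has $A^0=B^0=I$, so the hedge ``on the appropriate subspace'' is unnecessary. Since the paper treats this as a black box, your argument would serve as a self-contained substitute for the citation; it is the classical route (as opposed to, say, Pedersen's connectedness argument on the set of admissible exponents), and its virtue is that it isolates the single nontrivial ingredient --- operator monotonicity of $X\mapsto -X^{-1}$ --- and makes everything else routine bookkeeping.
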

		
		\begin{theorem}[Kato inequality \cite{Kato52}]\label{thm:kato_ineq}
			Let	$T\in\mathfrak{B}(\mathcal{H})$. Then
			\begin{equation*}
				\aps{\skal{Tx}{y}}^2\leq \skal{(T^*T)^\alpha x}{x}\skal{(TT^*)^{1-\alpha}y}{y},
			\end{equation*}
			for any $x,y\in\mathcal{H}$ and $\alpha\in[0,1]$.
		\end{theorem}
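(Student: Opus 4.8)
The plan is to deduce the inequality from the polar decomposition of $T$ together with the ordinary Cauchy--Schwarz inequality, treating $\alpha$ as a parameter that governs how the factor $(T^*T)^{1/2}$ is split between the two arguments. Write $T=U\aps{T}$ for the polar decomposition, where $\aps{T}=(T^*T)^{1/2}$ and $U$ is the partial isometry with initial space $\overline{\mathcal{R}(\aps{T})}=(\mathcal{N}(T))^\perp$; recall also that $\aps{T^*}=(TT^*)^{1/2}$ and that $U^*U$ is the orthogonal projection onto $\overline{\mathcal{R}(\aps{T})}$.

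First I would record the intertwining relation $U(T^*T)^s=(TT^*)^sU$ for every $s\ge 0$. Starting from $U\aps{T}=\aps{T^*}U$, an easy induction gives $U(T^*T)^n=(TT^*)^nU$ for all $n\in\mathbb{N}$, whence $Up(T^*T)=p(TT^*)U$ for every polynomial $p$; since $t\mapsto t^s$ is a uniform limit of polynomials on the compact interval $[0,\norm{T}^2]\supseteq\sigma(T^*T)\cup\sigma(TT^*)$, the continuous functional calculus upgrades this to arbitrary powers $s\ge 0$.

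Next, fix $\alpha\in[0,1]$ and factor $\aps{T}=(T^*T)^{(1-\alpha)/2}(T^*T)^{\alpha/2}$. Using the intertwining relation with $s=(1-\alpha)/2$ and the self-adjointness of $(TT^*)^{(1-\alpha)/2}$, I compute
\begin{align*}
\skal{Tx}{y}
&=\skal{U(T^*T)^{(1-\alpha)/2}(T^*T)^{\alpha/2}x}{y}\\
&=\skal{(TT^*)^{(1-\alpha)/2}U(T^*T)^{\alpha/2}x}{y}\\
&=\skal{U(T^*T)^{\alpha/2}x}{(TT^*)^{(1-\alpha)/2}y}.
\end{align*}
Applying the Cauchy--Schwarz inequality to this last pairing yields
\[
\aps{\skal{Tx}{y}}^2\le\norm{U(T^*T)^{\alpha/2}x}^2\,\norm{(TT^*)^{(1-\alpha)/2}y}^2.
\]

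Finally I would simplify the two factors. For $\alpha>0$ one has $\overline{\mathcal{R}((T^*T)^{\alpha/2})}=\overline{\mathcal{R}(\aps{T})}$, so $(T^*T)^{\alpha/2}x$ lies in the initial space of $U$ and therefore $\norm{U(T^*T)^{\alpha/2}x}^2=\norm{(T^*T)^{\alpha/2}x}^2=\skal{(T^*T)^{\alpha}x}{x}$; at $\alpha=0$ the weaker bound $\norm{Ux}^2\le\norm{x}^2$ coming from $U^*U\le I$ still suffices. Since $\norm{(TT^*)^{(1-\alpha)/2}y}^2=\skal{(TT^*)^{1-\alpha}y}{y}$, substituting gives the assertion. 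The only delicate points are the justification of the intertwining relation for fractional powers and the identification of the initial space of $U$ with $\overline{\mathcal{R}((T^*T)^{\alpha/2})}$; the endpoint cases $\alpha=0,1$ are handled by the very same computation and collapse to the usual Cauchy--Schwarz inequality.
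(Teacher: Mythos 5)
The paper states this result only as a quoted classical theorem with a citation to Kato's 1952 article and gives no proof of its own, so there is nothing internal to compare your argument against; I can only assess it on its own terms. Your proof is correct and is essentially the standard modern derivation of the mixed Schwarz inequality: the intertwining relation $U(T^*T)^s=(TT^*)^sU$ is justified properly (induction for integer powers, polynomial approximation and continuous functional calculus for general $s\ge 0$), the splitting $\aps{T}=(T^*T)^{(1-\alpha)/2}(T^*T)^{\alpha/2}$ is the right way to let $\alpha$ enter, and you correctly handle the one genuinely delicate point, namely that $\norm{U(T^*T)^{\alpha/2}x}=\norm{(T^*T)^{\alpha/2}x}$ because $(T^*T)^{\alpha/2}x$ lies in $\overline{\mathcal{R}(\aps{T})}=\mathcal{N}(T)^\perp$, the initial space of $U$ (with the $\alpha=0$ endpoint covered by $U^*U\le I$). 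No gaps.
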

		
		\begin{theorem}[Fuglede-Putnam Theorem \cite{Fuglede50, Putnam51}]\label{thm:fuglede_putnam}\index{Fuglede-Putnam Theorem}
			Let $T\in\mathfrak{B}(\mathcal{H})$  and let $M$ and $N$ be two normal operators. Then
			\begin{equation*}
				TN=MT\,\Longleftrightarrow\, TN^*=B^*T.
			\end{equation*}
		\end{theorem}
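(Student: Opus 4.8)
The plan is to follow Rosenblum's elegant argument based on entire functions and Liouville's theorem, which reduces the whole statement to a single boundedness estimate coming from normality. I would prove only the forward implication $TN=MT\Rightarrow TN^*=M^*T$; the converse then follows at once by applying this implication to the normal operators $M^*$ and $N^*$ (noting that $M,N$ are normal precisely when $M^*,N^*$ are).

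First, from $TN=MT$ one obtains by an immediate induction that $M^kT=TN^k$ for every $k\in\mathbb{N}$, hence $p(M)T=Tp(N)$ for every polynomial $p$, and therefore, after summing the exponential series,
\[
e^{\lambda M}T=Te^{\lambda N}, \qquad \lambda\in\mathbb{C}.
\]
Equivalently, $T=e^{-\lambda M}Te^{\lambda N}$ for all $\lambda$. I then introduce the operator-valued entire function
\[
F(\lambda)=e^{\lambda M^*}\,T\,e^{-\lambda N^*},\qquad \lambda\in\mathbb{C}.
\]

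The heart of the argument is to show that $F$ is bounded on all of $\mathbb{C}$. Substituting the identity $T=e^{-\bar\lambda M}Te^{\bar\lambda N}$ (the previous relation with $\lambda$ replaced by $\bar\lambda$) into the definition of $F$, and using that $M$ commutes with $M^*$ and $N$ with $N^*$ --- this is exactly where normality enters --- gives
\[
F(\lambda)=e^{\lambda M^*-\bar\lambda M}\,T\,e^{\bar\lambda N-\lambda N^*}.
\]
Now the exponents $\lambda M^*-\bar\lambda M$ and $\bar\lambda N-\lambda N^*$ are skew-Hermitian, so the two exponential factors are unitary; consequently $\norm{F(\lambda)}=\norm{T}$ for every $\lambda$. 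Thus for fixed $x,y\in\mathcal{H}$ the scalar entire function $\lambda\mapsto\skal{F(\lambda)x}{y}$ is bounded, hence constant by Liouville's theorem, and since this holds for all $x,y$ we conclude $F(\lambda)\equiv F(0)=T$. Therefore $e^{\lambda M^*}T=Te^{\lambda N^*}$ for all $\lambda$, and differentiating at $\lambda=0$ yields $M^*T=TN^*$, as desired.

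The main obstacle is conceptual rather than computational: one must resist the temptation to define $F$ with a $\bar\lambda$ (which would destroy holomorphy) and instead keep $F$ genuinely entire in $\lambda$, introducing the conjugate only inside the boundedness computation. The other delicate point is to verify that the rewriting $e^{\lambda M^*}e^{-\bar\lambda M}=e^{\lambda M^*-\bar\lambda M}$ is legitimate, which hinges precisely on the commutativity $M^*M=MM^*$; without normality this factorization fails and the unitarity of the exponential factors --- and with it the whole estimate --- is lost.
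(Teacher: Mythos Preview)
Your argument is the classical Rosenblum proof and is correct; you also tacitly (and correctly) repaired the typo in the displayed statement, where $B^*$ should read $M^*$. However, there is nothing to compare against: in the paper this theorem appears only as a cited preliminary result (Theorem~\ref{thm:fuglede_putnam}, attributed to \cite{Fuglede50, Putnam51}) and no proof is supplied there.
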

		
		\begin{corollary}\cite{Fuglede50}\label{cor:fuglede_normal}
			If $M$ and $N$ are commuting normal operators,  then $MN$ is also normal.
		\end{corollary}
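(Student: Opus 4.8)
The plan is to use the Fuglede--Putnam theorem (Theorem~\ref{thm:fuglede_putnam}) to promote the given commutation $MN=NM$ into commutation with the adjoints, and then to verify the normality identity $(MN)^*(MN)=(MN)(MN)^*$ by a direct computation. The whole argument hinges on a single structural fact; once it is in place the rest is bookkeeping.

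First I would invoke the classical Fuglede theorem, which is the special case of Theorem~\ref{thm:fuglede_putnam} obtained by taking the two normal operators equal: it asserts that any operator commuting with a normal operator also commutes with its adjoint. Applying this to $M$, which commutes with the normal operator $N$, yields $MN^*=N^*M$; applying it to $N$, which commutes with the normal operator $M$, yields $NM^*=M^*N$. Taking the adjoint of the original relation $MN=NM$ additionally gives $N^*M^*=M^*N^*$. Thus every operator in $\{M,M^*\}$ commutes with every operator in $\{N,N^*\}$, and each of $M$ and $N$ is normal.

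Next I would expand both sides of the desired identity. Since $(MN)^*=N^*M^*$, we have $(MN)^*(MN)=N^*M^*MN$ and $(MN)(MN)^*=MNN^*M^*$. Using normality of $M$ (so that $M^*M=MM^*$) together with $M^*N=NM^*$ and $MN=NM$, the first expression reduces to $N^*NMM^*$; using normality of $N$ (so that $NN^*=N^*N$) together with $MN^*=N^*M$ and $MN=NM$, the second expression reduces to the same $N^*NMM^*$. Hence the two sides coincide and $MN$ is normal.

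The computation is entirely routine once the cross-commutation relations are available, so I do not expect a genuine obstacle here. The one essential point is that commuting with a normal operator forces commuting with its adjoint, which is exactly what Theorem~\ref{thm:fuglede_putnam} supplies; without it the conclusion would fail for general non-normal operators. Thus the invocation of Fuglede's theorem is the crux of the proof.
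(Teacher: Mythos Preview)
Your proof is correct and is the standard argument. Note, however, that the paper does not actually prove this corollary: it is merely stated with a citation to \cite{Fuglede50}, so there is no in-paper proof to compare your attempt against.
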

		\medskip 
		
		\section{Roots of normal operators}\label{sec:roots_of_normal}
		
		In a recent paper (\cite{Stankovic24}), as the corollaries of a more general examination, we have obtained the following results:
		
		\begin{corollary}\cite{Stankovic24}\label{cor:normal_n_geq 2}
			Let $T\in\mathfrak{B}(\mathcal{H})$. If $T^2$ is  normal and $\mathrm{Re\,}(T^{2k+1})\geq 0$ for some $k\in\mathbb{N}$, then $T^n$ is normal for all $n\geq 2$. 
		\end{corollary}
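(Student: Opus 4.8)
The plan is to exploit the Radjavi--Rosenthal normal form from Theorem \ref{thm:normal_square}. Since $T^2$ is normal, $T$ is a square root of a normal operator, so up to unitary equivalence we may write $T = A \oplus M$, where $A$ is normal and
$$
M = \begin{bmatrix} B & C \\ 0 & -B \end{bmatrix},
$$
with $B$ normal, $C \geq 0$ one-to-one, and $BC = CB$. Because $A^n$ is normal for every $n$, the problem reduces entirely to the middle block: $T^n$ is normal if and only if $M^n$ is normal. Note that neither the injectivity of $C$ nor the spectral positioning of $B$ furnished by Theorem \ref{thm:normal_square} will be needed in what follows.

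The first concrete step is to compute the powers of $M$. Using $BC = CB$ one finds $M^2 = B^2 \oplus B^2$, and an immediate induction then gives
$$
M^{2m} = \begin{bmatrix} B^{2m} & 0 \\ 0 & B^{2m} \end{bmatrix}, \qquad M^{2m+1} = \begin{bmatrix} B^{2m+1} & B^{2m}C \\ 0 & -B^{2m+1} \end{bmatrix}.
$$
Thus every even power of $M$ is block diagonal with normal diagonal entries, hence automatically normal; the only possible obstruction to the normality of $T^n$ is the off-diagonal term $B^{2m}C$ in the odd powers. The entire argument therefore comes down to showing that the accretivity of a single odd power forces these off-diagonal terms to vanish for all $m \geq 1$.

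Next I would feed in the hypothesis. Writing $\mathrm{Re\,}(T^{2k+1}) = \mathrm{Re\,}(A^{2k+1}) \oplus \mathrm{Re\,}(M^{2k+1}) \geq 0$ and keeping only the middle summand gives
$$
\mathrm{Re\,}(M^{2k+1}) = \begin{bmatrix} \mathrm{Re\,}(B^{2k+1}) & \tfrac{1}{2}B^{2k}C \\ \tfrac{1}{2}(B^{2k}C)^* & -\mathrm{Re\,}(B^{2k+1}) \end{bmatrix} \geq 0.
$$
Since the diagonal compressions of a positive operator are positive, both $\mathrm{Re\,}(B^{2k+1}) \geq 0$ and $-\mathrm{Re\,}(B^{2k+1}) \geq 0$, forcing $\mathrm{Re\,}(B^{2k+1}) = 0$. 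The matrix then has vanishing diagonal blocks, and a positive operator whose diagonal blocks vanish must have vanishing off-diagonal blocks as well (testing $\skal{\cdot}{\cdot}$ against vectors of the form $(x,y)$ and $(-x,y)$), so $B^{2k}C = 0$.

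The last step, which I expect to be the main obstacle, is to propagate $B^{2k}C = 0$ to $B^{2m}C = 0$ for every $m \geq 1$. Here the normality of $B$ is essential: it yields $\mathcal{N}(B^{2k}) = \mathcal{N}(B)$, so from $B^{2k}C = 0$ we get $\mathcal{R}(C) \subseteq \mathcal{N}(B^{2k}) = \mathcal{N}(B)$, i.e. $BC = 0$, whence $B^{2m}C = B^{2m-1}(BC) = 0$ for all $m \geq 1$. Consequently every odd power $M^{2m+1}$ with $m \geq 1$ is block diagonal and normal, and together with the even powers this shows that $M^n$, and therefore $T^n$, is normal for all $n \geq 2$. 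The two points that must be checked with care are the identity $\mathcal{N}(B^{2k}) = \mathcal{N}(B)$ for normal $B$ and the vanishing of the corners of a positive block matrix with zero diagonal; both are standard, but they are precisely where the two hypotheses ``$T^2$ normal'' and ``$\mathrm{Re\,}(T^{2k+1}) \geq 0$'' genuinely interact.
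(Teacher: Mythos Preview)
Your argument is correct. The computation of the powers of $M$, the extraction of $\mathrm{Re\,}(B^{2k+1})=0$ and $B^{2k}C=0$ from the positivity of the $2\times 2$ block (which is exactly Theorem \ref{thm:positive_matrix} with $T_{11}=0$), and the propagation step via $\mathcal{N}(B^{2k})=\mathcal{N}(B)$ for normal $B$ are all sound; together they force every $M^n$ with $n\geq 2$ to be block diagonal with normal entries.

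Note, however, that the paper does \emph{not} prove Corollary \ref{cor:normal_n_geq 2}: it is quoted from \cite{Stankovic24} as a consequence of a more general result on polynomially accretive operators, so there is no in-paper proof to compare against. That said, your approach is entirely in the spirit of the present paper: the Radjavi--Rosenthal decomposition (Theorem \ref{thm:normal_square}) combined with Theorem \ref{thm:positive_matrix} is precisely the toolkit used here in the alternative proof of Theorem \ref{thm:putnam} and in Remark \ref{rem:hyponormal_roots}. In fact your argument yields a little more than stated: from $BC=0$ (and hence $CB=0$, $B^*C=0$, $CB^*=0$ by normality of $B$ and self-adjointness of $C$) one sees directly why $M$ itself need not be normal---the self-commutator of $M$ reduces to $\mathrm{diag}(C^2,-C^2)$, which vanishes only when the non-normal summand is absent---explaining why the conclusion stops at $n\geq 2$.
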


		\begin{corollary}\cite{Stankovic24}\label{cor:normal_injective}
			Let $T\in\mathfrak{B}(\mathcal{H})$. If $T$ is injective, $T^2$ is normal and $\mathrm{Re\,}(T^{2k+1})\geq 0$ for some $k\in\mathbb{N}$, then $T$ is normal. 
		\end{corollary}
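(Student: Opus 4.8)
The plan is to combine the Radjavi--Rosenthal structure theorem (Theorem \ref{thm:normal_square}) with the already available Corollary \ref{cor:normal_n_geq 2}. Since $T^2$ is normal, $T$ is a square root of a normal operator, so I would put $T$ into the form (\ref{eq:normal_square_representation}): on a decomposition $\mathcal{H}=\mathcal{H}_1\oplus(\mathcal{K}\oplus\mathcal{K})$ one has $T=A\oplus M$ with $M=\left[\begin{smallmatrix}B&C\\0&-B\end{smallmatrix}\right]$, where $A,B$ are normal, $C\geq 0$ is one-to-one, and $BC=CB$. As $A$ is automatically normal, everything reduces to showing that the middle block disappears, i.e. that $\mathcal{K}=\{0\}$ (equivalently $C=0$).

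Next I would feed the extra hypothesis into Corollary \ref{cor:normal_n_geq 2}: since $T^2$ is normal and $\mathrm{Re\,}(T^{2k+1})\geq 0$, that corollary gives in particular that $T^3$ is normal. Using $BC=CB$, a direct computation yields $M^2=B^2\oplus B^2$ and hence $M^3=\left[\begin{smallmatrix}B^3&CB^2\\0&-B^3\end{smallmatrix}\right]$. Comparing the $(1,1)$ entries of $(M^3)^*M^3$ and $M^3(M^3)^*$ and using that $B^3$ is normal, the normality of $T^3$ (equivalently of $M^3$) forces $(CB^2)(CB^2)^*=0$, that is $CB^2=0$.

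The decisive step is then purely structural. Because $C$ is injective, $CB^2=0$ forces $B^2=0$, and since $B$ is normal this gives $B=0$. Thus $M=\left[\begin{smallmatrix}0&C\\0&0\end{smallmatrix}\right]$, so every vector of the form $(x,0)$ lies in $\ker M\subseteq\ker T$. Injectivity of $T$ now forces the first copy of $\mathcal{K}$ to be trivial, and since $C$ is one-to-one the second copy collapses as well; hence the whole middle block vanishes and $T=A$ is normal.

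The point to be careful about is that injectivity together with normality of $T^2$ alone is \emph{not} enough: if $B$ were invertible then $M$ would already be injective yet non-normal whenever $C\neq 0$. The hypothesis $\mathrm{Re\,}(T^{2k+1})\geq 0$ is exactly what, through Corollary \ref{cor:normal_n_geq 2}, buys the extra normality of $T^3$ and thereby the relation $CB^2=0$. Accordingly, the two facts I would justify most carefully are that normality of the full operator $T^3$ is equivalent to normality of its diagonal blocks, and that the implication $CB^2=0\Rightarrow B=0$ genuinely relies on the injectivity of $C$ supplied by Theorem \ref{thm:normal_square}.
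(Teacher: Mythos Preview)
Your argument is correct. The only cosmetic point is the last clause: once you have shown that every vector in the first copy of $\mathcal{K}$ lies in $\ker T$, injectivity of $T$ already gives $\mathcal{K}=\{0\}$, and the second summand in $\mathcal{K}\oplus\mathcal{K}$ is the \emph{same} space, so it vanishes automatically; the appeal to injectivity of $C$ at that stage is superfluous.

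As for the comparison: the paper does not prove this corollary (it is quoted from \cite{Stankovic24}), but it does prove the more general Theorem~\ref{thm:asc_dsc}, and the method there is quite different from yours. There one again starts from Corollary~\ref{cor:normal_n_geq 2} to obtain that $T^2$ and $T^3$ are normal, but instead of the Radjavi--Rosenthal block form, the Fuglede--Putnam theorem is invoked: $T^2$ and $T^3$ commute with $T^*$, which after a short computation yields $T^2(TT^*-T^*T)=0$; the kernel condition $\mathcal{N}(T)=\mathcal{N}(T^2)$ (satisfied automatically when $T$ is injective) then gives $T(TT^*-T^*T)=0$, i.e.\ $T^*$ is quasinormal, and one finishes via the classical ``hyponormal root of a normal operator is normal'' fact (or the direct block argument of Remark~\ref{rem:hyponormal_roots}). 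Your route stays inside the explicit $3\times 3$ block model throughout and extracts the relation $CB^2=0$ directly from the normality of $M^3$; this is more hands-on and makes transparent exactly where each hypothesis is used, at the cost of being tied to the specific structure theorem. The paper's route is more portable (it immediately yields the ascent/descent generalization), while yours gives a cleaner self-contained argument for the injective case.
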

		
		Note that under a suitable rotation, we can actually obtain the following more general result:
		
		\begin{corollary}\label{cor:normal_injective_gen}
			Let $T\in\mathfrak{B}(\mathcal{H})$. If $T^2$ is  normal and $\mathcal{W}(T^{2k+1})$ lies on one side of a line passing through the origin (possibly including some points on the line) for some $k\in\mathbb{N}$, then $T^n$ is normal for all $n\geq 2$. \par 
			If, in addition, $T$ is injective, then $T$ is also normal.
		\end{corollary}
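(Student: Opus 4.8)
The plan is to reduce Corollary~\ref{cor:normal_injective_gen} to the already-established Corollaries~\ref{cor:normal_n_geq 2} and \ref{cor:normal_injective} by means of a rotation of $T$ in the complex plane. The key observation is that the hypothesis ``$\mathcal{W}(T^{2k+1})$ lies on one side of a line through the origin'' is precisely the statement that, after a suitable rotation, the real part becomes non-negative. Concretely, a closed half-plane bounded by a line through the origin has the form $\{z\in\mathbb{C}: \mathrm{Re}(e^{i\theta}z)\geq 0\}$ for some $\theta\in[0,2\pi)$. Hence saying $\mathcal{W}(T^{2k+1})$ lies in such a half-plane is equivalent to saying $\mathrm{Re}(e^{i\theta}T^{2k+1})\geq 0$, since for any operator $S$ one has $\mathrm{Re}\,\mathcal{W}(S)=\mathcal{W}(\mathrm{Re}\,S)$ and $\mathcal{W}(e^{i\theta}S)=e^{i\theta}\mathcal{W}(S)$, so that $\mathrm{Re}(e^{i\theta}S)\geq 0$ iff every point of $e^{i\theta}\mathcal{W}(S)$ has non-negative real part.

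First I would set $\lambda=e^{i\phi}$ where $\phi$ is chosen so that $\lambda^{2k+1}=e^{i\theta}$; such a $\phi$ exists because $2k+1$ is odd and hence the map $z\mapsto z^{2k+1}$ is surjective on the unit circle. I then define the rotated operator $S=\lambda T$. The point of this choice is that $S^{2k+1}=\lambda^{2k+1}T^{2k+1}=e^{i\theta}T^{2k+1}$, so by the half-plane hypothesis together with the equivalence above we obtain $\mathrm{Re}(S^{2k+1})\geq 0$. Moreover $S^2=\lambda^2 T^2$ is a scalar multiple of the normal operator $T^2$, and scalar multiples of normal operators are normal, so $S^2$ is normal. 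Thus $S$ satisfies exactly the hypotheses of Corollary~\ref{cor:normal_n_geq 2}.

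Applying Corollary~\ref{cor:normal_n_geq 2} to $S$ yields that $S^n$ is normal for all $n\geq 2$. Since $T^n=\lambda^{-n}S^n$ is again a scalar multiple of a normal operator, it follows that $T^n$ is normal for all $n\geq 2$, which is the first conclusion. For the second conclusion, if $T$ is in addition injective then so is $S=\lambda T$, so $S$ meets the hypotheses of Corollary~\ref{cor:normal_injective}; this gives that $S$ is normal, and hence $T=\lambda^{-1}S$ is normal as well.

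I do not anticipate any serious obstacle here, as this is essentially a normalization argument; the only point requiring care is the bookkeeping of the rotation exponent, namely verifying that an odd power $2k+1$ allows one to solve $\lambda^{2k+1}=e^{i\theta}$ on the unit circle and that the half-plane condition translates cleanly into the inequality $\mathrm{Re}(e^{i\theta}T^{2k+1})\geq 0$. The mildly delicate part is the phrase ``possibly including some points on the line,'' which I handle by working with the \emph{closed} half-plane throughout, so that the non-strict inequality $\mathrm{Re}(S^{2k+1})\geq 0$ is exactly what the geometric hypothesis provides.
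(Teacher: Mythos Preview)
Your argument is correct and is precisely the ``suitable rotation'' that the paper invokes (without further detail) to pass from Corollaries~\ref{cor:normal_n_geq 2} and \ref{cor:normal_injective} to Corollary~\ref{cor:normal_injective_gen}. One harmless quibble: the map $z\mapsto z^{m}$ is surjective on the unit circle for \emph{every} positive integer $m$, so the oddness of $2k+1$ plays no role in the existence of $\lambda$.
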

		
		That the injectivity condition cannot be completely dropped, we can see by taking any non-zero operator $T$ such that $T^2=0$. However,  it can be significantly relaxed, as the following shows:
		
		\begin{theorem}\label{thm:asc_dsc}
			Let $T\in\mathfrak{B}(\mathcal{H})$ be such that
			\begin{enumerate}[$(i)$]
				\item $T^2$ is normal;
				\item $\mathcal{W}(T^{2k+1})$ lies on one side of a line passing through the origin (possibly including some points on the line) for some $k\in\mathbb{N}$;
				\item $\mathrm{asc\,}(T)= 1$ or $\mathrm{dsc\,}(T)=1$.
			\end{enumerate}
			Then $T$ is normal.
		\end{theorem}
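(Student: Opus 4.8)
The plan is to normalize the numerical-range hypothesis and then read off, from the Radjavi--Rosenthal form of a normal square root, enough structure to force the ``obstruction block'' of $T$ to disappear. First I would reduce (ii) to an accretivity statement: replacing $T$ by $e^{i\psi}T$ leaves $T^2$ normal (a scalar multiple of a normal operator is normal), leaves $\mathrm{asc\,}(T)$, $\mathrm{dsc\,}(T)$ and the normality of $T$ unchanged (since $\mathcal{N}((e^{i\psi}T)^n)=\mathcal{N}(T^n)$ and likewise for ranges), while $(e^{i\psi}T)^{2k+1}=e^{i(2k+1)\psi}T^{2k+1}$; choosing $\psi$ suitably I may assume that the half-plane in (ii) is the right half-plane, i.e. $\mathrm{Re\,}(T^{2k+1})\geq 0$. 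Since $T^2$ is normal, Theorem \ref{thm:normal_square} gives $T=A\oplus S$ on $\mathcal{H}_1\oplus(\mathcal{K}\oplus\mathcal{K})$, where $A$ is normal and $S=\begin{bmatrix}B&C\\0&-B\end{bmatrix}$ with $B$ normal, $C\geq 0$ one-to-one and $BC=CB$. As $T=A\oplus S$, powers, kernels and ranges split orthogonally; in particular $\mathrm{Re\,}(S^{2k+1})\geq 0$, and since a normal operator has ascent and descent at most $1$, condition (iii) is equivalent to $\mathrm{asc\,}(S)\leq 1$ or $\mathrm{dsc\,}(S)\leq 1$. It therefore suffices to prove that the block $S$ is absent, i.e. $\mathcal{K}=\{0\}$.

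Next I would compute $S^{2k+1}$. Using $BC=CB$, a short induction gives $S^{n}=\begin{bmatrix}B^n&D_n\\0&(-B)^n\end{bmatrix}$ with $D_n=\big(\sum_{j=0}^{n-1}(-1)^j\big)CB^{n-1}$, so the odd power has the clean form $S^{2k+1}=\begin{bmatrix}B^{2k+1}&CB^{2k}\\0&-B^{2k+1}\end{bmatrix}$. Taking real parts, the two diagonal blocks of $\mathrm{Re\,}(S^{2k+1})$ are $\mathrm{Re\,}(B^{2k+1})$ and $-\mathrm{Re\,}(B^{2k+1})$. Because the compressions of a positive operator to the coordinate subspaces are positive, $\mathrm{Re\,}(S^{2k+1})\geq 0$ forces $\mathrm{Re\,}(B^{2k+1})=0$; and a positive operator whose diagonal blocks vanish has vanishing off-diagonal blocks (as follows, e.g., from Theorem \ref{thm:positive_matrix}, since $T_{11}=0$ gives $\mathcal{R}(T_{12})\subseteq\mathcal{R}(T_{11}^{1/2})=\{0\}$), whence $CB^{2k}=0$. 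As $C$ is one-to-one this yields $B^{2k}=0$, and as $B$ is normal (so $\sigma(B)=\{0\}$, or $\norm{B}^{2k}=\norm{B^{2k}}=0$) we conclude $B=0$. Thus the obstruction collapses to the pure nilpotent $S=\begin{bmatrix}0&C\\0&0\end{bmatrix}$.

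Finally I would invoke (iii). With $B=0$ one has $S^2=0$, $\mathcal{N}(S)=\mathcal{K}\oplus\{0\}$ (using that $C$ is one-to-one) and $\mathcal{R}(S)=\mathcal{R}(C)\oplus\{0\}$. Hence $\mathrm{asc\,}(S)\leq 1$ means $\mathcal{N}(S)=\mathcal{N}(S^2)=\mathcal{K}\oplus\mathcal{K}$, forcing $\mathcal{K}=\{0\}$; and $\mathrm{dsc\,}(S)\leq 1$ means $\mathcal{R}(S)=\mathcal{R}(S^2)=\{0\}$, forcing $\mathcal{R}(C)=\{0\}$ and again $\mathcal{K}=\{0\}$. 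In either case $S$ is absent and $T=A$ is normal.

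I expect the main work to be the middle step: obtaining the exact form of $S^{2k+1}$ and extracting, from the single inequality $\mathrm{Re\,}(S^{2k+1})\geq 0$, both the vanishing of the diagonal and of the off-diagonal block, so that $B=0$. Once that is in hand, the ascent/descent bookkeeping is routine; this also makes transparent why the injectivity in Corollary \ref{cor:normal_injective_gen} can be relaxed, since all that is really needed is to rule out $\mathcal{N}(S)\neq\mathcal{N}(S^2)$ (respectively $\mathcal{R}(S)\neq\mathcal{R}(S^2)$).
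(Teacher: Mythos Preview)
Your proof is correct but follows a genuinely different route from the paper's. The paper first invokes Corollary \ref{cor:normal_injective_gen} (which in turn rests on results from \cite{Stankovic24}) to conclude that $T^n$ is normal for all $n\geq 2$; it then applies Fuglede--Putnam to $T^2$ and $T^3$ to obtain $(TT^*-T^*T)T^2=0$ (or $T^2(TT^*-T^*T)=0$), uses the asc/dsc hypothesis to peel off one factor of $T$, deduces that $T$ (resp.\ $T^*$) is quasinormal, and finishes by citing the Stampfli/Ando fact that hyponormal roots of normal operators are normal. By contrast, you work directly inside the Radjavi--Rosenthal block form: you compute $S^{2k+1}$ explicitly, read off from $\mathrm{Re\,}(S^{2k+1})\geq 0$ that both $\mathrm{Re\,}(B^{2k+1})=0$ and $CB^{2k}=0$, and conclude $B=0$; the asc/dsc hypothesis then kills the remaining nilpotent $\begin{bmatrix}0&C\\0&0\end{bmatrix}$. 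Your argument is more self-contained---it uses only Theorems \ref{thm:positive_matrix} and \ref{thm:normal_square} from the preliminaries and avoids the external corollaries, Fuglede--Putnam, and the Stampfli/Ando result---while the paper's route yields the additional intermediate information that all higher powers of $T$ are normal and connects the result to the quasinormal framework. Interestingly, your computation that $B=0$ is essentially the content of the paper's Remark \ref{rem:hyponormal_roots} carried out one level earlier.
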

		
		\begin{proof}
			From Corollary \ref{cor:normal_injective_gen}, we have that $T^n$ is normal for all $n\geq 2$. Specially, $T^2$ and $T^3$ are normal. Since $T^2$ and $T^3$ commute with $T$, Theorem \ref{thm:fuglede_putnam} implies that $T^2$  and $T^3$ also commute with $T^*$. 
			
			First consider the case when $\mathrm{dsc\,}(T)= 1$. By the previous discussion,
			\begin{equation*}
				T^*T T^2=T^*T^3=T^3T^*=TT^2T^*=TT^*T^2,
			\end{equation*}
			and thus, $(TT^*-T^*T)T^2=0$.  Since $\mathcal{R}(T)=\mathcal{R}(T^2)$ we have that $(TT^*-T^*T)T=0$, i.e., $TT^*T=T^*T^2$. In other words, $T$ is quasinormal. Using the fact that hyponormal (or even paranormal) $n$-th roots of normal operators are normal (see \cite{Ando72} or \cite{Stampfli62}), we conclude that $T$ is normal. 
			
			Now assume that $\mathrm{asc\,}(T)= 1$. From 
			\begin{equation*}
				T^2T^*T=T^*T^2T=T^*T^3=T^3T^*=T^2TT^*
			\end{equation*}
			we have that $T^2(TT^*-T^*T)=0$. Now, since $\mathcal{N}(T)=\mathcal{N}(T^2)$, we get that $T(TT^*-T^*T)=0$, i.e., $T^*$ is quasinormal. Furthermore, as $(T^*)^2$ is normal, from the same reason as it the previous case, we conclude that $T^*$ is normal, and thus $T$ is  normal.
		\end{proof}

		\begin{remark}\label{rem:hyponormal_roots}
			The implication ''$T$ quasinormal $\implies$ $T$ normal`` in the previous proof  can be also deduced directly from Theorem \ref{thm:normal_square}. Indeed, assume that $T$ is not normal. Using representation \eqref{eq:normal_square_representation}, a direct computation shows that 
			\begin{equation*}
				TT^*T=\begin{bmatrix}
					AA^*A&0&0\\
					0&BB^*B+C^2B&BB^*C+C^3-CB^*B\\
					0&-BCB&-BC^2-BB^*B
				\end{bmatrix},
			\end{equation*}
			and
			\begin{equation*}
				T^*T^2=\begin{bmatrix}
					A^*A^2&0&0\\
					0&B^*B^2&0\\
					0&CB^2&-B^*B^2
				\end{bmatrix}.
			\end{equation*}
			Specially,
			\begin{equation}\label{eq:2_2_entry}
				BB^*B+C^2B=B^*B^2
			\end{equation}
			and
			\begin{equation}\label{eq:2_3_entry}
				BB^*C+C^3-CB^*B=0.
			\end{equation}
			Normality of $B$ and \eqref{eq:2_2_entry} now imply that $C^2B=0$. Furthermore, since $C$ is one-to-one, it must be $B=0$. Combining this with \eqref{eq:2_3_entry} yields that $C^3=0$. However, this contradicts the fact that $C$ is one-to-one. Therefore, $T$ must be normal.\par 
			
		\end{remark}

		It is interesting to note that if we allow $k$ to be zero in previous theorem, then condition $(iii)$ is redundant. This fact is enclosed in the following result due to Putnam \cite{Putnam57}:
		
		\begin{theorem}\cite{Putnam57}\label{thm:putnam}
			Let $T\in\mathfrak{B}(\mathcal{H})$. If $\mathcal{W}(T)$ lies on one side of a line passing through the origin (possibly including some points on the line) and $T^2$ is normal, then  $T$ is also normal.
		\end{theorem}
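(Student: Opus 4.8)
The plan is to first normalize the hypothesis by a rotation. Since $\mathcal{W}(T)$ lies in a closed half-plane bounded by a line through the origin, there is a unimodular scalar $\lambda=e^{i\alpha}$ such that $\mathcal{W}(\lambda T)=\lambda\,\mathcal{W}(T)$ is contained in the closed right half-plane $\{z:\mathrm{Re\,}z\geq 0\}$. Via the equivalence $\mathcal{W}(\lambda T)\subseteq\{\mathrm{Re\,}\geq 0\}\Longleftrightarrow \mathrm{Re\,}(\lambda T)\geq 0$, this says exactly that $\lambda T$ is accretive. Moreover $(\lambda T)^2=\lambda^2 T^2$ is again normal, and $\lambda T$ is normal if and only if $T$ is. Hence, after replacing $T$ by $\lambda T$, I may assume from the outset that $\mathrm{Re\,}(T)\geq 0$ and $T^2$ is normal, and it suffices to prove that such a $T$ is normal.

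Next I would invoke the Radjavi--Rosenthal representation (Theorem \ref{thm:normal_square}): since $T^2$ is normal, $T$ has the form
\begin{equation*}
T=\begin{bmatrix} A&0&0\\ 0&B&C\\ 0&0&-B\end{bmatrix},
\end{equation*}
with $A,B$ normal, $C\geq 0$ one-to-one, and $BC=CB$. A direct check shows that $T$ is normal precisely when $C=0$: if $C=0$ then $T=A\oplus B\oplus(-B)$ is a direct sum of normal operators, while if $C\neq 0$ the central block $S=\left[\begin{smallmatrix} B&C\\ 0&-B\end{smallmatrix}\right]$ fails to be normal, since the $(1,1)$-entries of $S^*S$ and $SS^*$ differ by $C^2\neq 0$. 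Thus the whole theorem collapses to the single assertion that $C=0$.

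To extract this, I would compute the real part in the given decomposition. Writing $H=\mathrm{Re\,}B$ and using $C^*=C$, one obtains
\begin{equation*}
\mathrm{Re\,}T=\mathrm{Re\,}A\ \oplus\ \begin{bmatrix} H&\tfrac{1}{2}C\\ \tfrac{1}{2}C&-H\end{bmatrix}.
\end{equation*}
Accretivity forces the central block $M=\left[\begin{smallmatrix} H&C/2\\ C/2&-H\end{smallmatrix}\right]$ to be positive. Testing $M\geq 0$ against vectors of the form $(x,0)$ and $(0,y)$ (equivalently, looking at its diagonal blocks) gives $H\geq 0$ and $-H\geq 0$ simultaneously, hence $H=0$. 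With $H=0$, testing $M\geq 0$ against vectors $(u,-u)$ yields $-\skal{Cu}{u}\geq 0$, i.e.\ $\skal{Cu}{u}\leq 0$ for every $u$; combined with $C\geq 0$ this forces $\skal{Cu}{u}=0$ for all $u$, whence $C=0$ and $T$ is normal.

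The conceptual content lies entirely in the two reductions: the rotation converting the numerical-range condition into accretivity, and the observation that in the normal-square-root form normality amounts exactly to $C=0$. Once these are in place the computation of $\mathrm{Re\,}T$ is routine. The only genuinely delicate point, which I expect to be the main obstacle to present cleanly, is squeezing $C=0$ out of the \emph{indefinite} sign pattern of the block $M$: one must combine diagonal positivity (to annihilate $H$) with the off-diagonal test \emph{together with} the hypothesis $C\geq 0$ (to annihilate $C$), the positivity of $C$ being essential here, since a general self-adjoint off-diagonal term need not vanish.
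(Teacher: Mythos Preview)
Your proof is correct and follows essentially the same route as the paper's: rotate so that $\mathrm{Re\,}T\geq 0$, invoke the Radjavi--Rosenthal representation, and use positivity of the $2\times 2$ central block of $\mathrm{Re\,}T$ to force first $\mathrm{Re\,}B=0$ and then $C=0$. The only difference is cosmetic: in the last step the paper appeals to the Anderson--Trapp range condition $\mathcal{R}(C)\subseteq\mathcal{R}((B+B^*)^{1/2})=\{0\}$ from Theorem~\ref{thm:positive_matrix}, whereas you use the more elementary test-vector argument with $(u,-u)$ together with $C\geq 0$.
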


		Thus, Theorem \ref{thm:asc_dsc} can be treated as a complement of the previous Putnam's result. Here, we also use an opportunity to provide an alternative of Theorem \ref{thm:putnam}.

		\begin{proof}[Proof of Theorem \ref{thm:putnam}]
			Without loss of generality we may assume that $\mathrm{Re\,}T\geq 0$. Otherwise, we can consider operator $S=e^{i\theta}T$, for an appropriate $\theta\in\mathbb{R}$. Assume to the contrary, that $T$ is not normal. Then the block $\begin{bmatrix}
				B&C\\ 0&-B
			\end{bmatrix}$ from \eqref{eq:normal_square_representation} is non-trivial. Using the fact that $\mathrm{Re\,}T\geq 0$, we easily get that
			\begin{equation*}
				\begin{bmatrix}
					B+B^* & C\\
					C&-(B+B^*)
				\end{bmatrix}\geq 0.
			\end{equation*}
			Theorem \ref{thm:positive_matrix} now implies that 
			\begin{enumerate}[$(i)$]
				\item $B+B^*\geq 0$;
				\item $\mathcal{R}(C)\subseteq\mathcal{R}((B+B^*)^{1/2})$;
				\item $-(B+B^*)\geq 0$.
			\end{enumerate}
			
			Clearly, conditions $(i)$ and $(iii)$ yield that $B+B^*=0$, while condition $(ii)$ further implies that $C=0$. However, this is not possible, since $C$ is one-to-one. This shows that $T$ must be normal.
		\end{proof}

		\begin{corollary}\label{cor:putnam_generalized_accr}
			Let $T\in\mathfrak{B}(\mathcal{H})$ be such that
			\begin{enumerate}[$(i)$]
				\item $T^2$ is normal;
				\item $\mathrm{Re\,}T^{2k+1}\geq 0$ for some $k\in\mathbb{N}$;
				\item $\mathrm{asc\,}(T)= 1$ or $\mathrm{dsc\,}(T)=1$.
			\end{enumerate}
			Then $T$ is normal.
		\end{corollary}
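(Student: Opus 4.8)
The plan is to recognize that the accretivity hypothesis $\mathrm{Re\,}(T^{2k+1})\geq 0$ is nothing but a special case of the numerical-range hypothesis $(ii)$ of Theorem \ref{thm:asc_dsc}, so that the corollary drops out immediately from that theorem.

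First I would recall the elementary identity $\skal{\mathrm{Re\,}(S)x}{x}=\mathrm{Re\,}\skal{Sx}{x}$, valid for every $S\in\mathfrak{B}(\mathcal{H})$ and every $x\in\mathcal{H}$, which follows at once from $\mathrm{Re\,}{S}=\frac{S+S^*}{2}$ and $\skal{S^*x}{x}=\overline{\skal{Sx}{x}}$. Applying it with $S=T^{2k+1}$, the condition $\mathrm{Re\,}(T^{2k+1})\geq 0$ is equivalent to $\mathrm{Re\,}\skal{T^{2k+1}x}{x}\geq 0$ for all $x\in\mathcal{H}$, that is, to the inclusion $\mathcal{W}(T^{2k+1})\subseteq\{z\in\mathbb{C}:\mathrm{Re\,}z\geq 0\}$. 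In other words, the numerical range of $T^{2k+1}$ lies in the closed right half-plane, hence on one side of the imaginary axis --- a line through the origin --- possibly meeting it along the axis itself.

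Consequently, with hypotheses $(i)$ and $(iii)$ assumed verbatim, all three hypotheses of Theorem \ref{thm:asc_dsc} are satisfied for the given $k$. Invoking that theorem yields that $T$ is normal, which is exactly the asserted conclusion. I expect no genuine obstacle here: the entire content of the corollary is the observation that accretivity of $T^{2k+1}$ is precisely a half-plane condition on its numerical range, so the statement is a direct specialization of Theorem \ref{thm:asc_dsc} rather than a result requiring independent argument.
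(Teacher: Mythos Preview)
Your proposal is correct and matches the paper's approach: the paper states this corollary with no proof at all, implicitly treating it as an immediate specialization of Theorem~\ref{thm:asc_dsc}, which is exactly what you argue by observing that $\mathrm{Re\,}T^{2k+1}\geq 0$ is equivalent to $\mathcal{W}(T^{2k+1})$ lying in the closed right half-plane.
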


		As a direct consequence of the previous result, we have the following:
		
		\begin{corollary}\label{cor:k_accretive_positive}
			Let $T\in\mathfrak{B}(\mathcal{H})$ be such that
			\begin{enumerate}[$(i)$]
				\item $T^2\geq 0$;
				\item $\mathrm{Re\,}T^{2k+1}\geq 0$ for some $k\in\mathbb{N}$;
				\item $\mathrm{asc\,}(T)= 1$ or $\mathrm{dsc\,}(T)=1$.
			\end{enumerate}
			Then $T\geq 0$.
		\end{corollary}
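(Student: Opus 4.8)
The plan is to bootstrap from Corollary \ref{cor:putnam_generalized_accr}, which already disposes of the normality question, and then to upgrade normality to positivity by a short spectral argument that exploits the two positivity hypotheses on the powers of $T$.

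First I would note that $T^2 \geq 0$ is in particular self-adjoint, hence normal. Consequently all three hypotheses of Corollary \ref{cor:putnam_generalized_accr} are in force: the assumption $T^2 \geq 0$ is stronger than the normality of $T^2$ required there, while conditions $(ii)$ and $(iii)$ coincide verbatim with ours. Thus Corollary \ref{cor:putnam_generalized_accr} applies and yields that $T$ is normal. This reduces the claim to the following statement: a normal operator $T$ with $T^2 \geq 0$ and $\mathrm{Re\,}T^{2k+1} \geq 0$ must be positive.

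To finish, I would invoke the spectral theorem for the normal operator $T$ and write $T = \int_{\sigma(T)} \lambda \, dE(\lambda)$. Via the Borel functional calculus, the hypothesis $T^2 \geq 0$ translates into $\lambda^2 \in [0,\infty)$ for $E$-almost every $\lambda$; since a complex number satisfies $\lambda^2 \geq 0$ exactly when it is real, this forces $\sigma(T) \subseteq \mathbb{R}$, and therefore $T = T^*$. Being self-adjoint, $T^{2k+1}$ is self-adjoint as well, so $\mathrm{Re\,}T^{2k+1} = T^{2k+1}$, and hypothesis $(ii)$ becomes simply $T^{2k+1} \geq 0$. Applying the functional calculus once more, this means $\lambda^{2k+1} \geq 0$ for $E$-almost every $\lambda \in \sigma(T) \subseteq \mathbb{R}$; as the exponent $2k+1$ is odd, this is equivalent to $\lambda \geq 0$. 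Hence $\sigma(T) \subseteq [0,\infty)$, which gives $T \geq 0$.

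I do not anticipate a genuine obstacle: the hard part, namely deducing normality under these hypotheses, is already contained in Corollary \ref{cor:putnam_generalized_accr}, and the remaining steps are routine once the spectral theorem is available. The only point deserving a little care is the passage from $T^2 \geq 0$ to $\sigma(T) \subseteq \mathbb{R}$ — one must observe that $\lambda^2 \geq 0$ excludes both purely imaginary spectral values and values off the real axis, so that normality together with $T^2 \geq 0$ already delivers self-adjointness, and only then does the odd-power hypothesis $(ii)$ enter to fix the sign of the spectrum.
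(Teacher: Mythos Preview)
Your proposal is correct and follows essentially the same route as the paper: invoke Corollary~\ref{cor:putnam_generalized_accr} to obtain normality, then use the functional calculus to deduce $\sigma(T)\subseteq[0,\infty)$ from $\lambda^2\geq 0$ and $\mathrm{Re\,}\lambda^{2k+1}\geq 0$. The only cosmetic difference is that you insert the intermediate observation that $T$ is self-adjoint before handling the odd power, whereas the paper treats both spectral constraints in one breath.
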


		\begin{proof}
			By Corollary \ref{cor:putnam_generalized_accr}, it follows that $T$ is normal. Now let $\lambda\in\sigma(T)$. By the continuous functional calculus, we have that $\lambda^2\geq 0$ and $\mathrm{Re\,}\lambda^{2k+1}\geq 0$, which yields $\lambda\geq 0$. Therefore, $\sigma(T)\subseteq[0,\infty)$, which directly implies that $T$ is a positive operator.
		\end{proof}
		
		\begin{remark}\label{rem:zero_case_positive}
			The case $k=0$ of the previous corollary was already established in \cite{Putnam58}, and again, it does not require condition $(iii)$.
		\end{remark}
		Also, the natural question arises whether we can replace a pair $(2,2k+1)$ with a pair $(p,q)$, where $p, q\geq 2$ are coprime numbers. The answer here is negative. Indeed, let $T=e^{i\frac{\pi}{3}}I$. Then $T$ is bijective, $T^6=I\geq 0$ and $\mathrm{Re\,}T^7=\mathrm{Re\,}T=\cos\frac{\pi}{3}I\geq 0$, while $T$ is not positive. Note that the similar questions have been already considered. In \cite{DehimiMortad23}, it was shown that if $T$ is invertible and $T^p$ and $T^q$ are normal for some coprime numbers $p,q\geq 2$, then $T$ is also normal. In the following theorem, we generalize this result to the point where the other direction holds, as well.
		
		\begin{theorem}\label{thm:comprime_normal}
			Let $T\in\mathfrak{B}(\mathcal{H})$. Then the following conditions are equivalent:
			\begin{enumerate}[$(i)$]
				\item $T$ is normal;
				\item $\mathrm{asc\,}(T)= 1$ and there exist coprime numbers $p,q\geq 2$ such that $T^p$ and $T^q$ are normal.
			\end{enumerate}
		\end{theorem}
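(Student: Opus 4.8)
The plan is to establish the two implications separately, with essentially all the difficulty concentrated in $(ii)\Rightarrow(i)$. The implication $(i)\Rightarrow(ii)$ is immediate: taking $(p,q)=(2,3)$, every power of a normal operator is normal, while for normal $T$ the identity $\|Tx\|=\|T^*x\|$ forces $\mathcal{N}(T)=\mathcal{N}(T^*)$ and hence $\mathcal{N}(T^2)=\mathcal{N}(T)$, so that $\mathrm{asc\,}(T)=1$.

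For the converse I would first promote the two given normal powers into an entire tail of normal powers. Since $p$ and $q$ are coprime, the Sylvester--Frobenius (``Chicken McNugget'') theorem guarantees that every integer $n\ge n_0:=(p-1)(q-1)=pq-p-q+1$ can be written as $n=ap+bq$ with $a,b$ nonnegative integers. For such $n$ one has $T^n=(T^p)^a(T^q)^b$, a product of two commuting normal operators (powers of the commuting normals $T^p,T^q$), so Corollary \ref{cor:fuglede_normal} shows that $T^n$ is normal. Thus $T^n$ is normal for all $n\ge n_0$; in particular $T^N$ and $T^{N+1}$ are normal for $N:=n_0$.

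The next step turns this into a commutation relation for $T^*$. As $T$ commutes with each normal power $T^n$ ($n\ge n_0$), the Fuglede--Putnam theorem (Theorem \ref{thm:fuglede_putnam}) gives that $T$ commutes with $(T^*)^n$, and, passing to adjoints, that $T^*$ commutes with $T^n$. Writing $D:=TT^*-T^*T$, the commutation of $T^*$ with $T^N$ and with $T^{N+1}$ then yields, after a short computation,
\[
T^N D=T^{N+1}T^*-T^N T^*T=T^{N+1}T^*-T^*T^{N+1}=0 .
\]

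The heart of the argument --- and the step I expect to be the main obstacle --- is the descent from $T^N D=0$ to $TD=0$; this is precisely where the ascent hypothesis takes over the role played by invertibility in \cite{DehimiMortad23}. Since $\mathrm{asc\,}(T)=1$ forces $\mathcal{N}(T^n)=\mathcal{N}(T)$ for every $n\ge 1$ (a routine induction on $n$), the relation $T^N D=0$ says $\mathcal{R}(D)\subseteq\mathcal{N}(T^N)=\mathcal{N}(T)$, and therefore $TD=0$. Unwinding $TD=0$ gives $T(TT^*)=(TT^*)T$, i.e. $T$ commutes with the self-adjoint operator $TT^*$; taking adjoints shows that $S:=T^*$ commutes with $S^*S$, so that $T^*$ is quasinormal. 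Since $(T^*)^p=(T^p)^*$ is normal and quasinormal operators are hyponormal, Stampfli's theorem \cite{Stampfli62} (a hyponormal root of a normal operator is normal) finally forces $T^*$, and hence $T$, to be normal.
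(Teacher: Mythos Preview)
Your proof is correct and follows essentially the same route as the paper: obtain two consecutive normal powers $T^N,T^{N+1}$, use Fuglede--Putnam to make $T^*$ commute with them, deduce $T^N D=0$ and descend via $\mathrm{asc\,}(T)=1$ to $TD=0$, so that $T^*$ is quasinormal and hence normal by Stampfli. The only cosmetic difference is that you invoke the Sylvester--Frobenius theorem to produce the consecutive normal powers, whereas the paper uses a B\'ezout identity $1=kp+lq$ and sets $n=|k|p+|l|q$, $n+1=(|k|+k)p+(|l|+l)q$.
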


		\begin{proof} From the obvious reasons, we only prove $(ii)\Rightarrow(i)$. Since $p$ and $q$ are coprime, by B\' ezout's theorem, there exist $k, l\in\mathbb{Z}$ such that $1=kp+lq$. Let $n:=|k|p+|l|q$. Then, $n+1=(|k|+k)p+(|l|+l)q$, and so
			\begin{equation*}
				T^n=(T^p)^{|k|}(T^q)^{|l|},\quad 
				\text{ and }\quad T^{n+1}=(T^p)^{|k|+k}(T^q)^{|l|+l}.
			\end{equation*}
			Since $T^p$ and $T^q$ are commuting normal operators, Corollary \ref{cor:fuglede_normal} implies that $T^n$ and $T^{n+1}$ are also normal. Using the similar technique as in the proof of Theorem \ref{thm:asc_dsc}, we can obtain that $T$ is normal.
		\end{proof}

		\begin{corollary}\label{cor:prime_numbers_positive}
			Let $T\in\mathfrak{B}(\mathcal{H})$ be such that $\mathrm{asc\,}(T)= 1$. If there exist coprime numbers $p,q\geq 2$ such that $T^p\geq 0$ and $T^q\geq 0$, then $T\geq 0$.
		\end{corollary}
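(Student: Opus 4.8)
The plan is to derive this as a direct consequence of Theorem \ref{thm:comprime_normal} together with a spectral argument, in the same spirit as the proof of Corollary \ref{cor:k_accretive_positive}. First I would observe that since every positive operator is self-adjoint and hence normal, the hypotheses $T^p\geq 0$ and $T^q\geq 0$ guarantee in particular that $T^p$ and $T^q$ are normal. Combined with the assumption $\mathrm{asc\,}(T)=1$ and the coprimality of $p$ and $q$, Theorem \ref{thm:comprime_normal} immediately yields that $T$ is normal.

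It then remains to locate the spectrum of $T$. Since $T$ is normal, the spectral theorem reduces the problem to showing $\sigma(T)\subseteq[0,\infty)$, because a normal operator is positive precisely when its spectrum is contained in the non-negative reals. To this end, fix $\lambda\in\sigma(T)$. By the spectral mapping theorem, $\lambda^p\in\sigma(T^p)$ and $\lambda^q\in\sigma(T^q)$; as $T^p\geq 0$ and $T^q\geq 0$, both of these spectra lie in $[0,\infty)$, and hence $\lambda^p\geq 0$ and $\lambda^q\geq 0$.

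The heart of the argument is then purely elementary. Writing $\lambda=re^{i\theta}$ with $r\geq 0$ and $\theta\in[0,2\pi)$, the case $r=0$ gives $\lambda=0\geq 0$ trivially, so assume $r>0$. Then $\lambda^p\geq 0$ and $\lambda^q\geq 0$ force $p\theta$ and $q\theta$ to be integer multiples of $2\pi$, say $\theta=2\pi m/p=2\pi n/q$ for some integers $m,n$, i.e. $mq=np$. Since $\gcd(p,q)=1$, it follows that $p\mid m$, and as $0\leq\theta<2\pi$ this forces $m=0$ and hence $\theta=0$. Therefore $\lambda=r\geq 0$, giving $\sigma(T)\subseteq[0,\infty)$ and completing the proof that $T\geq 0$. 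I do not anticipate a genuine obstacle here: the only point requiring slight care is ensuring that the coprimality of $p$ and $q$ is invoked correctly in the final number-theoretic step, which is precisely where the specific pair $(2,2k+1)$ appearing in Corollary \ref{cor:k_accretive_positive} is now replaced by an arbitrary coprime pair.
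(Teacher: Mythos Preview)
Your proof is correct and follows essentially the same approach as the paper: first invoke Theorem \ref{thm:comprime_normal} to obtain normality of $T$, then use the spectral mapping theorem together with the coprimality of $p$ and $q$ to conclude $\sigma(T)\subseteq[0,\infty)$. Your treatment is in fact slightly more careful than the paper's in separating the case $r=0$ and fixing the range of $\theta$ to pin down $m=0$, but the substance is identical.
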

		
		\begin{proof}
			By Theorem \ref{thm:comprime_normal}, it follows that $T$ is normal. Now let $\lambda=re^{i\theta}\in\sigma(T)$. By the Spectral Mapping Theorem, we have that $\lambda^p\geq 0$ and $\lambda^q\geq 0$. Therefore, there exist $k,l\in\mathbb{Z}$ such that
			\begin{equation*}
				p\theta=2k\pi\quad 
				\text{ and }\quad q\theta=2l\pi.
			\end{equation*}
			From here, $pl=qk$, and since $(p,q)=1$, we have that $p|k$. This shows that $\theta=2\frac{k}{p}\pi$, $\frac{k}{p}\in\mathbb{Z}$. We conclude that $\sigma(T)\subseteq[0,\infty)$, and thus $T\geq 0$.
		\end{proof}

		\medskip  
		\section{Accretivity of powers}\label{sec:accretive_powers}
		We continue the examination by considering the recent question posed by the author in \cite{Mortad22counterexamples}:
		\begin{question}\cite[p. 288]{Mortad22counterexamples}\label{question_mortad}
			Let $T\in\mathfrak{B}(\mathcal{H})$. If $\mathrm{Re\,}T^{k}\geq 0$ for each $k\in\mathbb{N}$, does it follow that $T\geq 0$?
		\end{question}
		
		Under the additional assumption that $T$ is normal, we can easily get an affirmative answer to the previous question. Indeed, if $\lambda\in\sigma(T)$, then using the continuous functional calculus, we have that $\mathrm{Re\,}\lambda^k\geq 0$ for all $k\in\mathbb{N}$, which certainly implies that $\lambda\geq 0$. Thus, $\sigma(T)\subseteq[0,\infty)$ and so $T$ must be positive.\par 
		The normality can be further replaced with the belonging to the much wider class of hyponormal operators. Let $k\in\mathbb{N}$ be arbitrary. Since $\mathrm{Re\,}T^{k}\geq 0$, we have that for all $x\in\mathcal{H}$, $\norm{x}=1$, $$\mathrm{Re}\skal{T^{k}x}{x}=\skal{\mathrm{Re\,}T^{k}x}{x}\geq 0.$$
		Thus, using, for example, \cite[Ch. 1, Proposition 2.1]{GauWu21}, 
		$$\sigma(T^k)\subseteq\overline{\mathcal{W}(T^k)}\subseteq \{z\in\mathbb{C}:\,\mathrm{Re\,}z\geq 0\},$$
		and the previous inclusion holds for all $k\in\mathbb{N}$. Now let $\lambda\in\sigma(T)$. Again, we can easily check that $\lambda\geq 0$ and consequently $\sigma(T)\subseteq[0,\infty)$. Since a hyponormal operator with the positive spectrum is positive (see \cite{Putnam65} or \cite{Stampfli65}), once again we conclude that $T\geq 0$.\par 
		
		Can we further relax the hyponormality condition? The answer is yes. In fact, in the next theorem, we provide an affirmative answer to the Question \ref{question_mortad} without any additional assumption, and even without requiring all the powers of $T$ to be accretive.

		\begin{theorem}\label{thm:all_accretive_positive}
			Let $T\in\mathfrak{B}(\mathcal{H})$. If $\mathrm{Re\,}T^{2^k}\geq 0$ for each $k\in\mathbb{N}_0$, then $T\geq 0$.
		\end{theorem}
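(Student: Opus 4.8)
The plan is to reinterpret the hypotheses as a family of sectorial estimates on the numerical ranges of the powers $T^{2^k}$, and then to force the opening angle of these sectors down to zero by an induction driven by the L\"owner--Heinz inequality. Writing $S_k:=T^{2^k}$, the assumption $\mathrm{Re\,}S_k\geq 0$ says precisely that $\mathcal{W}(S_k)$ is contained in the closed right half-plane, i.e.\ in the closed sector of half-angle $\pi/2$ about the positive real axis, and this holds simultaneously for every $k\in\mathbb{N}_0$. I would prove by induction on $m$ that in fact $\mathcal{W}(S_k)$ lies in the closed sector of half-angle $\pi/2^{\,m+1}$ about $[0,\infty)$, simultaneously for all $k$. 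Letting $m\to\infty$ then forces $\mathcal{W}(T)=\mathcal{W}(S_0)\subseteq[0,\infty)$, which means $T=T^*$ together with $\skal{Tx}{x}\geq 0$ for all $x$, that is, $T\geq 0$.

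The engine of the induction is the elementary identity $\mathrm{Re\,}(S^2)=(\mathrm{Re\,}S)^2-(\mathrm{Im\,}S)^2$, valid for any $S\in\mathfrak{B}(\mathcal{H})$. Assume the claim for $m$, put $\theta=\pi/2^{\,m+1}$ and $\alpha=\pi/2-\theta\in[0,\pi/2)$, and fix $k$. Consider the rotated operator $S'=e^{i\alpha/2}S_k$. Since $\mathcal{W}(S_k)$ sits in the sector of half-angle $\theta$, rotating by the smaller angle $\alpha/2\leq\alpha$ keeps its numerical range in the right half-plane, so $\mathrm{Re\,}S'\geq 0$; on the other hand $(S')^2=e^{i\alpha}S_{k+1}$, and the inductive hypothesis at level $k+1$ gives $\mathrm{Re\,}(e^{i\alpha}S_{k+1})\geq 0$, i.e.\ $\mathrm{Re\,}(S')^2\geq 0$. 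The identity then yields $(\mathrm{Re\,}S')^2\geq(\mathrm{Im\,}S')^2$, and because $\mathrm{Re\,}S'\geq 0$, Theorem \ref{thm:lowner_heinz_ineq} with exponent $1/2$ upgrades this to $\mathrm{Re\,}S'\geq|\mathrm{Im\,}S'|$, so that $\mathcal{W}(S')$ lies in the sector of half-angle $\pi/4$. Undoing the rotation places $\mathcal{W}(S_k)$ in the sector of half-angle $\pi/4$ centred at argument $-\alpha/2$; running the same computation with $e^{-i\alpha/2}S_k$ gives the mirror sector centred at $+\alpha/2$, and intersecting the two confines $\mathcal{W}(S_k)$ to the sector of half-angle $\pi/4-\alpha/2=\theta/2=\pi/2^{\,m+2}$. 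As $k$ was arbitrary, this is the claim for $m+1$.

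I expect the main obstacle to be exactly the non-commutativity of $\mathrm{Re\,}S_k$ and $\mathrm{Im\,}S_k$: one cannot square the numerical range pointwise, since $\skal{S_k^2x}{x}\neq\skal{S_kx}{x}^2$ in general, so the naive reduction to the scalar ``doubling-map'' heuristic (that $\cos(2^k\phi)\geq 0$ for all $k$ forces $\phi=0$) is unavailable. The device that bypasses this is the pair of rotations $e^{\pm i\alpha/2}S_k$ combined with the identity and L\"owner--Heinz, applied at levels $k$ and $k+1$ at once; the assumption that $\mathrm{Re\,}T^{2^k}\geq 0$ for \emph{every} $k\in\mathbb{N}_0$ is precisely what keeps the inductive step self-sustaining. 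The only routine points to verify are the angle bookkeeping (that the two rotated sectors genuinely overlap, guaranteed by $\theta\leq\pi/2$) and the first step $m=0$, where $\theta=\pi/2$, $\alpha=0$, the rotations are trivial, and the identity together with Theorem \ref{thm:lowner_heinz_ineq} directly gives $\mathrm{Re\,}S_k\geq|\mathrm{Im\,}S_k|$, i.e.\ the half-angle $\pi/4$ sector, which is the first genuine improvement over the right half-plane.
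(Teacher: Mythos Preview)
Your argument is correct and rests on the same two pillars as the paper's proof: the identity $\mathrm{Re\,}(S^2)=(\mathrm{Re\,}S)^2-(\mathrm{Im\,}S)^2$ together with the L\"owner--Heinz inequality, iterated so as to halve the opening angle of a sector containing the numerical range. The difference is purely in how the halving step is organised. The paper fixes $A=\mathrm{Re\,}T$, $B=\mathrm{Im\,}T$ and proves by induction (applying the hypothesis to $T^2$) the two-sided estimate $-A\leq \cot(\pi/2^{n+1})\,B\leq A$; the passage from $n$ to $n+1$ is an explicit completing-the-square computation which is then simplified via the half-angle identity $\cot(x/2)=\cot x+\sqrt{1+\cot^2 x}$. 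Your version instead carries the full family $S_k=T^{2^k}$ through the induction and replaces the algebraic manipulation by the pair of rotations $e^{\pm i\alpha/2}S_k$, reducing every inductive step to the single base computation ``$\mathrm{Re\,}S\geq 0$ and $\mathrm{Re\,}S^2\geq 0$ imply $\mathrm{Re\,}S\geq|\mathrm{Im\,}S|$'' followed by an intersection of sectors. This is a genuinely cleaner and more geometric packaging of the same idea; the paper's route, on the other hand, yields the quantitative inequality $-A\leq\cot(\pi/2^{n+1})B\leq A$ for $T$ itself directly, without reference to the higher $S_k$.

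One minor point you leave implicit: the passage from the operator inequality $\mathrm{Re\,}S'\geq|\mathrm{Im\,}S'|$ to the numerical-range inclusion $\mathcal{W}(S')\subseteq\{|\arg z|\leq\pi/4\}$ uses $|\langle Bx,x\rangle|\leq\langle|B|x,x\rangle$ for self-adjoint $B$; this is what the paper invokes the Kato inequality (Theorem~\ref{thm:kato_ineq}) for, though it also follows at once from the decomposition $B=B_+-B_-$.
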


		\begin{proof}
			Let $A:=\mathrm{Re\,}T$ and $B:=\mathrm{Im\,}T$. Then, 
			\begin{align*}
				T^2=(A+iB)^2=A^2-B^2 + i(AB+BA),
			\end{align*}
			i.e., $\mathrm{Re\,}T^2=A^2-B^2$ and $\mathrm{Im\,}T^2=AB+BA$. By assumption, $\mathrm{Re\,}T^2\geq 0$, and thus we have that $A^2\geq B^2$. Since $B$ is self-adjoint, it follows that $A^2\geq |B|^2$. From $A\geq 0$, and Theorem \ref{thm:lowner_heinz_ineq}, we obtain that $A\geq |B|$.\par  Now let $x\in\mathcal{H}$ be arbitrary. Using Theorem \ref{thm:kato_ineq}, the self-adjointness of $B$ implies that
			\begin{equation*}
				\aps{\skal{Bx}{x}}^2\leq \scal{(B^2)^{\frac{1}{2}} x}{x}^2,
			\end{equation*}
			i.e., \begin{equation}\label{eq:b_|b|}
				\aps{\skal{Bx}{x}}\leq \skal{|B| x}{x}.
			\end{equation}
			Combining this with $A\geq |B|$, we obtain
			\begin{equation*}
				\aps{\skal{Bx}{x}}\leq \skal{|B| x}{x}\leq\skal{A x}{x}.
			\end{equation*}
			From here, it follows that
			\begin{equation}\label{eq:-a_b_a}
				-A\leq B\leq A.
			\end{equation}
			Next, let us show that 
			\begin{equation}\label{eq:-a_cot_b_a}
				\mathrm{Re\,}T\geq 0, \mathrm{Re\,}T^{2}\geq 0,\ldots, \mathrm{Re\,}T^{2^n}\geq 0\quad\implies\quad	-A\leq \cot \left(\frac{\pi}{2^{n+1}}\right) B\leq A
			\end{equation}
			for all $n\in\mathbb{N}$.  The statements is true for $n=1$, by \eqref{eq:-a_b_a}. Now assume that is it also true for some $n\in\mathbb{N}$ and let 
			\begin{equation*}
				\mathrm{Re\,}T\geq 0, \mathrm{Re\,}T^{2}\geq 0,\ldots, \mathrm{Re\,}T^{2^n}\geq 0, \mathrm{Re\,}T^{2^{n+1}}\geq 0.
			\end{equation*}
			By applying the induction hypothesis to an operator $T^2$, we get that $$-\mathrm{Re\,}T^2\leq \cot \left(\frac{\pi}{2^{n+1}}\right)\mathrm{Im\,}T^2\leq \mathrm{Re\,}T^2,$$
			i.e.,
			\begin{equation*}
				-(A^2-B^2)\leq \cot \left(\frac{\pi}{2^{n+1}}\right)(AB+BA)\leq A^2-B^2.
			\end{equation*}
			From $\cot \left(\frac{\pi}{2^{n+1}}\right)(AB+BA)\leq A^2-B^2$, we have
			\begin{equation*}
				B^2\leq A^2-\cot \left(\frac{\pi}{2^{n+1}}\right)AB-\cot \left(\frac{\pi}{2^{n+1}}\right)BA,
			\end{equation*}
			and so
			\begin{equation*}
				\left(1+\cot^2 \left(\frac{\pi}{2^{n+1}}\right)\right)|B|^2\leq \left(A-\cot \left(\frac{\pi}{2^{n+1}}\right) B\right)^2.
			\end{equation*}
			Again, Theorem \ref{thm:lowner_heinz_ineq} and \eqref{eq:b_|b|} implies that
			\begin{equation*}
				\sqrt{1+\cot^2 \left(\frac{\pi}{2^{n+1}}\right)}B\leq \sqrt{1+\cot^2 \left(\frac{\pi}{2^{n+1}}\right)}|B|\leq A-\cot \left(\frac{\pi}{2^{n+1}}\right) B,
			\end{equation*}
			i.e.,
			\begin{equation}\label{eq:nonsimplified_expression}
				\left(\cot \left(\frac{\pi}{2^{n+1}}\right)+\sqrt{1+\cot^2 \left(\frac{\pi}{2^{n+1}}\right)}\right) B\leq A.
			\end{equation}
			Furthermore, we can easily check that for any $x\in\left(0,\pi\right)$,
			\begin{equation*}
				\cot \frac{x}{2}=\cot x+\sqrt{1+\cot^2 x}.
			\end{equation*}
			This, together with \eqref{eq:nonsimplified_expression} yields
			\begin{equation*}
				\cot \left(\frac{\pi}{2^{n+2}}\right)B\leq A.
			\end{equation*}
			Similarly, we can check that  $-(A^2-B^2)\leq \cot \left(\frac{\pi}{2^{n+1}}\right)(AB+BA)$ implies
			\begin{equation*}
				-A\leq \cot \left(\frac{\pi}{2^{n+2}}\right)B.
			\end{equation*}
			Therefore, we have shown that
			\begin{equation*}
				-A\leq \cot \left(\frac{\pi}{2^{n+2}}\right)B\leq A,
			\end{equation*}
			which demonstrates that \eqref{eq:-a_cot_b_a} holds for all $n\in\mathbb{N}$.\par 
			Using the initial assumption, we now have that for all $n\in\mathbb{N}$, 
			\begin{equation*}
				-\tan \left(\frac{\pi}{2^{n+1}}\right)A\leq B\leq \tan \left(\frac{\pi}{2^{n+1}}\right) A.
			\end{equation*}
			By letting $n\to\infty$, we conclude that $B=0$. Finally, this implies that $T=A\geq 0$.
		\end{proof}

		\begin{corollary}
			Let $T\in\mathfrak{B}(\mathcal{H})$. If $\mathrm{Re\,}T^{k}\geq 0$ for each $k\in\mathbb{N}$, then $T\geq 0$.
		\end{corollary}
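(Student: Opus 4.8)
The plan is to observe that this corollary is an immediate specialization of Theorem \ref{thm:all_accretive_positive}, requiring no additional work beyond a trivial set-theoretic inclusion. The key point is that the dyadic exponents $\{2^k : k \in \mathbb{N}_0\} = \{1, 2, 4, 8, \ldots\}$ form a subset of $\mathbb{N}$. Consequently, the hypothesis of the present corollary --- that $\mathrm{Re\,}T^{k} \geq 0$ for \emph{every} $k \in \mathbb{N}$ --- is strictly stronger than the hypothesis of Theorem \ref{thm:all_accretive_positive}, which only demands accretivity of the powers $T^{2^k}$.

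First I would note that under the present assumption, the inequality $\mathrm{Re\,}T^{2^k} \geq 0$ holds in particular for each $k \in \mathbb{N}_0$, since every such exponent $2^k$ belongs to $\mathbb{N}$. Applying Theorem \ref{thm:all_accretive_positive} then yields $T \geq 0$ directly. There is no genuine obstacle in this argument: all the analytic content --- the L\"owner--Heinz and Kato inequalities, together with the cotangent half-angle iteration bounding $B=\mathrm{Im\,}T$ by $A=\mathrm{Re\,}T$ and forcing $B=0$ in the limit --- is already absorbed into the proof of the theorem. The role of this corollary is simply to record that the full-strength form of Question \ref{question_mortad}, in which all integer powers of $T$ are assumed accretive, is answered in the affirmative; the more striking feature, made explicit by the theorem itself, is that only the sparse dyadic subsequence of powers is actually needed to force positivity.
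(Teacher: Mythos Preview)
Your argument is correct and is exactly the paper's own approach: the corollary is stated without proof immediately after Theorem~\ref{thm:all_accretive_positive}, being the obvious specialization obtained by restricting the hypothesis to the dyadic exponents $2^k\in\mathbb{N}$.
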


		\begin{corollary}
			Let $T\in\mathfrak{B}(\mathcal{H})$. If $\mathrm{asc\,}(T)= 1$ and there exists $k_0\in\mathbb{N}$ such that $\mathrm{Re\,}T^{k}\geq 0$ for all $k\geq k_0$, then $T\geq 0$.
		\end{corollary}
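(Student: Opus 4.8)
The plan is to reduce the statement to the already-established Corollary~\ref{cor:prime_numbers_positive} by manufacturing two \emph{coprime} positive powers of $T$. The obstruction to applying Theorem~\ref{thm:all_accretive_positive} directly to $T$ is that the base of its dyadic induction needs $\mathrm{Re\,}T\geq 0$ and $\mathrm{Re\,}T^{2}\geq 0$, which may fail when $k_0\geq 2$; only the tail $\{k\geq k_0\}$ of powers is assumed accretive. The key observation is that this tail is stable under the dyadic doubling $k\mapsto 2k$ used in that theorem, provided we first pass to a sufficiently high power of $T$.

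First I would dispose of the case $k_0=1$: then $\mathrm{Re\,}T^{k}\geq 0$ for every $k\in\mathbb{N}$, and the previous corollary already yields $T\geq 0$ (without even invoking the ascent hypothesis). So I may assume $k_0\geq 2$.

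Next, I would apply Theorem~\ref{thm:all_accretive_positive} to the operator $S:=T^{k_0}$. For every $j\in\mathbb{N}_0$ we have $S^{2^{j}}=T^{k_0 2^{j}}$ with exponent $k_0 2^{j}\geq k_0$, hence $\mathrm{Re\,}S^{2^{j}}=\mathrm{Re\,}T^{k_0 2^{j}}\geq 0$ by hypothesis; Theorem~\ref{thm:all_accretive_positive} then gives $T^{k_0}=S\geq 0$. Running the identical argument with $S:=T^{k_0+1}$ (here $(k_0+1)2^{j}\geq k_0$ again guarantees accretivity of every dyadic power) produces $T^{k_0+1}\geq 0$. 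Finally, since $k_0$ and $k_0+1$ are consecutive integers they are coprime, and both are $\geq 2$; with $\mathrm{asc\,}(T)=1$ in force, Corollary~\ref{cor:prime_numbers_positive} applied to the pair $(p,q)=(k_0,k_0+1)$ delivers $T\geq 0$.

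I do not expect a genuine obstacle here: the entire content is the interchange of ``taking powers'' with ``accretivity'', i.e.\ checking that all dyadic powers of $T^{k_0}$ and of $T^{k_0+1}$ remain in the accretive range $\{k\geq k_0\}$, which holds simply because doubling an exponent that is $\geq k_0$ keeps it $\geq k_0$. The only points deserving care are the edge case $k_0=1$ and the verification that $\mathrm{asc\,}(T)=1$ is exactly the hypothesis required to invoke Corollary~\ref{cor:prime_numbers_positive} on a coprime pair.
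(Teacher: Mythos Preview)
Your argument is correct and follows essentially the same route as the paper: pick two coprime exponents $p,q\geq\max\{2,k_0\}$ (you choose the concrete pair $(k_0,k_0+1)$, the paper leaves the choice abstract), apply Theorem~\ref{thm:all_accretive_positive} to $T^{p}$ and $T^{q}$ using that all relevant powers lie in the accretive range $\{k\geq k_0\}$, and conclude via Corollary~\ref{cor:prime_numbers_positive}. The only cosmetic difference is that you verify accretivity along the dyadic sequence $2^{j}$ (which is exactly what Theorem~\ref{thm:all_accretive_positive} needs), whereas the paper checks it for all $n\in\mathbb{N}$.
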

		
		\begin{proof}
			Let $p, q\geq k_0$ be two coprime numbers. Then, for all $n\in\mathbb{N}$, $\mathrm{Re\,}(T^{p})^n=\mathrm{Re\,}T^{np}\geq 0$, and similarly, $\mathrm{Re\,}(T^{q})^n\geq 0$. By Theorem \ref{thm:all_accretive_positive}, we have that $T^p\geq 0$ and $T^q\geq 0$. The rest of the proof now follows from Corollary \ref{cor:prime_numbers_positive}.
		\end{proof}

		From the proof of Theorem \ref{thm:all_accretive_positive}, more specifically, from \eqref{eq:-a_cot_b_a}, we also have the following:
		
		\begin{theorem}
			Let $T\in\mathfrak{B}(\mathcal{H})$ and $n\in\mathbb{N}$. If $\mathrm{Re\,}T^{2^k}\geq 0$ for $k\in\{0, 1,\ldots, n\}$, then
			\begin{equation*}
				\mathcal{W}(T)\subseteq\left\{z\in\mathbb{C}:\, |\arg z|\leq \frac{\pi}{2^{n+1}}\right\}.
			\end{equation*}
		\end{theorem}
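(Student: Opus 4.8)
The plan is to read the conclusion off directly from the key inequality \eqref{eq:-a_cot_b_a} established in the proof of Theorem \ref{thm:all_accretive_positive}, translating the operator inequality recorded there into a geometric constraint on the numerical range. Writing $A=\mathrm{Re\,}T$ and $B=\mathrm{Im\,}T$ as in that proof, the hypothesis $\mathrm{Re\,}T^{2^k}\geq 0$ for $k\in\{0,1,\ldots,n\}$ is precisely the left-hand side of the implication \eqref{eq:-a_cot_b_a}, so I would immediately obtain
\[
-A\leq \cot\left(\frac{\pi}{2^{n+1}}\right)B\leq A,
\]
together with $A\geq 0$ coming from the $k=0$ case.

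Next I would fix a unit vector $x\in\mathcal{H}$ and set $z=\skal{Tx}{x}$, a generic point of $\mathcal{W}(T)$. Since $\mathrm{Re\,}z=\skal{Ax}{x}$ and $\mathrm{Im\,}z=\skal{Bx}{x}$, the positivity of $A$ gives $\mathrm{Re\,}z\geq 0$, while the sandwiched inequality above, tested against $x$, yields $\left|\cot\left(\frac{\pi}{2^{n+1}}\right)\mathrm{Im\,}z\right|\leq \mathrm{Re\,}z$. Using $\cot\left(\frac{\pi}{2^{n+1}}\right)>0$ this rearranges to
\[
|\mathrm{Im\,}z|\leq \tan\left(\frac{\pi}{2^{n+1}}\right)\mathrm{Re\,}z.
\]

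The final step is the elementary geometric translation: the pair of conditions $\mathrm{Re\,}z\geq 0$ and $|\mathrm{Im\,}z|\leq \tan\left(\frac{\pi}{2^{n+1}}\right)\mathrm{Re\,}z$ together describe exactly the closed sector $\{z:\,|\arg z|\leq \pi/2^{n+1}\}$. Indeed, for $z\ne 0$ with $\mathrm{Re\,}z\geq 0$ one has $|\arg z|<\pi/2$, so the inequality on the imaginary part becomes $|\tan(\arg z)|\leq \tan(\pi/2^{n+1})$, and monotonicity of $\tan$ on $(0,\pi/2)$ gives $|\arg z|\leq \pi/2^{n+1}$; the remaining case $\mathrm{Re\,}z=0$ forces $\mathrm{Im\,}z=0$, placing $z$ at the origin, which lies in the sector. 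Since $x$ was arbitrary, this yields the asserted inclusion.

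Because every analytic ingredient is already supplied by \eqref{eq:-a_cot_b_a}, there is essentially no obstacle to overcome here; the only points demanding mild care are the boundary behaviour at the origin and the use of the monotonicity of $\tan$ on $(0,\pi/2)$ needed to pass from the bound on $|\mathrm{Im\,}z|/\mathrm{Re\,}z$ to the bound on $|\arg z|$.
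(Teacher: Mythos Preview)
Your proof is correct and follows exactly the paper's own approach: invoke the inequality \eqref{eq:-a_cot_b_a} from the proof of Theorem~\ref{thm:all_accretive_positive}, evaluate it at a unit vector to obtain $|\mathrm{Im}\,\langle Tx,x\rangle|\leq \tan(\pi/2^{n+1})\,\mathrm{Re}\,\langle Tx,x\rangle$, and read off the sector inclusion. You have in fact supplied more detail than the paper, which simply asserts that the last step is immediate.
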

		
		\begin{proof}
			Let $x\in\mathcal{H}$, $\norm{x}=1$. From $-A\leq \cot \left(\frac{\pi}{2^{n+1}}\right) B\leq A$ we can easily check that
			\begin{equation*}
				\left|\mathrm{Im}\skal{Tx}{x}\right|\leq \tan \left(\frac{\pi}{2^{n+1}}\right)\mathrm{Re}\skal{Tx}{x},
			\end{equation*}
			from where the conclusion directly follows.
		\end{proof}

		\medskip
		\section{Conclusion}\label{sec:conclusion}
		In this paper, we have obtained several new characterizations of positive operators, mostly in terms of the accretivity of their powers. We collect the results into the following theorem:
		\begin{theorem}
			Let $T\in\mathfrak{B}(\mathcal{H})$. The following conditions are equivalent:
			\begin{enumerate}[$(i)$]
				\item $T\geq 0$;
				\item $T^2\geq 0$ and $\mathrm{Re\,}T\geq 0$;
				\item $\mathrm{asc\,}(T)= 1$, $T^2\geq 0$, and $\mathrm{Re\,}T^{2k+1}\geq 0$ for some $k\in\mathbb{N}$;
				\item $\mathrm{asc\,}(T)= 1$, and there exist coprime numbers $p,q\geq 2$ such that $T^p\geq 0$ and $T^q\geq 0$;
				\item $\mathrm{Re\,}T^{2^k}\geq 0$ for each $k\in\mathbb{N}_0$;
				\item $\mathrm{Re\,}T^{k}\geq 0$ for each $k\in\mathbb{N}$;
				\item $\mathrm{asc\,}(T)= 1$, and there exists $k_0\in\mathbb{N}$ such that $\mathrm{Re\,}T^{k}\geq 0$ for all $k\geq k_0$.
			\end{enumerate}
		\end{theorem}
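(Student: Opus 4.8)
The plan is to prove the equivalence by using condition $(i)$ as a hub: I would show that $(i)$ implies each of $(ii)$--$(vii)$, and that each of $(ii)$--$(vii)$ implies $(i)$. Since every backward implication has already been established in Sections \ref{sec:roots_of_normal} and \ref{sec:accretive_powers}, the genuine content of the proof is almost entirely the forward direction together with correctly matching each listed condition to the result that settles it.

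For the forward direction I would first record the key observation that a positive operator is self-adjoint, so every power $T^n$ is self-adjoint, and by the continuous functional calculus $T\geq 0$ forces $T^n\geq 0$ for all $n$. Consequently $\mathrm{Re\,}T^n=T^n\geq 0$ for every $n$, which immediately yields $(ii)$, $(iii)$, $(v)$ and $(vi)$, gives $(iv)$ with the choice $p=2$, $q=3$, and supplies the tail condition in $(vii)$. The only point needing a line of justification is the hypothesis $\mathrm{asc\,}(T)=1$ appearing in $(iii)$, $(iv)$ and $(vii)$: for self-adjoint $T$ one has $T^2x=0\Leftrightarrow\skal{T^2x}{x}=\norm{Tx}^2=0\Leftrightarrow Tx=0$, so $\mathcal{N}(T)=\mathcal{N}(T^2)$ and the ascent equals one. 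This completes $(i)\Rightarrow(j)$ for every $j$.

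For the backward direction I would simply cite the earlier results. The implication $(ii)\Rightarrow(i)$ is the degenerate $k=0$ instance recorded in Remark \ref{rem:zero_case_positive} (Putnam \cite{Putnam58}), in which the ascent/descent hypothesis is not needed; $(iii)\Rightarrow(i)$ is Corollary \ref{cor:k_accretive_positive}; $(iv)\Rightarrow(i)$ is Corollary \ref{cor:prime_numbers_positive}; and $(v)\Rightarrow(i)$ is Theorem \ref{thm:all_accretive_positive}. For $(vi)\Rightarrow(i)$ I would observe that $\{2^k:k\in\mathbb{N}_0\}\subseteq\mathbb{N}$, so $(vi)$ trivially implies $(v)$ and hence $(i)$ (this is precisely the first corollary following Theorem \ref{thm:all_accretive_positive}); finally $(vii)\Rightarrow(i)$ is the subsequent corollary treating the tail condition $\mathrm{Re\,}T^k\geq 0$ for $k\geq k_0$. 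Having shown $(i)\Leftrightarrow(j)$ for each $j$, all seven conditions are equivalent.

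The main obstacle here is organizational rather than mathematical, since the theorem is a compendium assembling results proved elsewhere in the paper. The real care lies in confirming that condition $(ii)$ is exactly the $k=0$ case in which the ascent hypothesis of Corollary \ref{cor:k_accretive_positive} may legitimately be dropped, and in verifying that the forward implications genuinely furnish every auxiliary hypothesis — most notably $\mathrm{asc\,}(T)=1$ — demanded by conditions $(iii)$, $(iv)$ and $(vii)$.
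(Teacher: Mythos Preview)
Your proposal is correct and matches the paper's own treatment: the theorem is presented in Section~\ref{sec:conclusion} purely as a compendium with no separate proof, so the only work is exactly what you describe—verifying that $(i)$ trivially supplies each hypothesis (including $\mathrm{asc\,}(T)=1$ via $\mathcal{N}(T)=\mathcal{N}(T^2)$ for self-adjoint $T$) and then citing Remark~\ref{rem:zero_case_positive}, Corollaries~\ref{cor:k_accretive_positive} and~\ref{cor:prime_numbers_positive}, Theorem~\ref{thm:all_accretive_positive}, and its two corollaries for the converse directions. Your identification of the one delicate point, namely that $(ii)$ is the $k=0$ case handled by Putnam~\cite{Putnam58} without any ascent assumption, is exactly right.
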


		Finally, by looking at Corollary \ref{cor:putnam_generalized_accr} and Corollary \ref{thm:comprime_normal}, it is natural to ask the following question:
		
		\begin{question}
			Let $T\in\mathfrak{B}(\mathcal{H})$ be such that $\mathrm{asc\,}(T)= 1$. If there exist coprime numbers $p,q\geq 2$ such that $T^p$ is normal  and $\mathrm{Re\,}T^q\geq 0$, does it follow that $T$ is normal?
		\end{question}
		
		\medskip 
		\section*{Declarations}
		%=====================
		\noindent{\bf{Funding}}\\
		This work has been supported by the Ministry of Science, Technological Development and Innovation of the Republic of Serbia [Grant Number: 451-03-137/2025-03/ 200102].		
		
		%=====================
		\vspace{0.5cm}
		
		%=====================
		\noindent{\bf{Availability of data and materials}}\\
		\noindent No data were used to support this study.
		%=====================
		\vspace{0.5cm}\\
		%=====================
		\noindent{\bf{Competing interests}}\\
		\noindent The author declares that he has no competing interests.
		%=====================
		\vspace{0.5cm}
		
		%=====================
		%\noindent{\bf{Author contribution}}\\
		%\noindent
		%The work was a collaborative effort of all authors, who contributed equally to writing the article. All authors have read and approved the final manuscript.
		%=====================
		
		\vspace{0.5cm}
		
		%	\noindent{\bf Acknowledgment:} 
		
		%-----------------------------------------------{chapter}{Bibliography}------------------------------------------
		
		%-----------------------------------------------------------------------------------------
	\end{document}